\newtheorem{theorem}{Theorem}
\newtheorem{lemma}{Lemma}
\newtheorem{remark}{Remark}
\newtheorem{definition}{Definition}
\newtheorem{assumption}{Assumption}
\begin{document}

	\title[On convergence of weighted functionals of random fields]{Weak convergence of weighted additive functionals of long-range dependent fields}
	
	\author{T. Alodat}
	\address{Department of Mathematics and Statistics La Trobe University, Melbourne, Victoria, 3086}
	\email{Alodat.t@students.latrobe.edu.au}
	\thanks{ }
	
	\author{A. Olenko}
	\address{Department of Mathematics and Statistics La Trobe University, Melbourne, Victoria, 3086}
	\email{a.olenko@latrobe.edu.au}
	\thanks{Supported in part under the Australian Research Council's Discovery Project funding scheme (project number DP160101366)}
	
	\subjclass[2000]{Primary 60G60; Secondary 60G12, 60F05}
	\date{03/10/2017}
	\dedicatory{This paper is dedicated to the 85th anniversary of professor M. Yadrenko's birth.}
	\keywords{Random fields, Hermite-type distributions, Non-central limit theorems, Long-range dependence, Weighted nonlinear functionals}
	\begin{abstract}
		We provide asymptotic results for the distribution of weighted nonlinear functionals of Gaussian field with long-range dependence. We also show that integral functionals and the corresponding additive functionals have same distributions under certain assumptions. The result is applied to integrals over a multidimensional rectangle with a constant weight function.
	\end{abstract}
	\maketitle
\section{Introduction}
Professor M. Yadrenko (1932-2004) obtained fundamental results in the theory and statistical inference of random fields. He and A. M. Yaglom were founders of the modern spectral theory of spatial random processes. A good account of M. Yadrenko's research and main results can be found in his classical monograph~\cite{yadrenko1983spectral} and paper \cite{buldygin1992works}. In this article we present some new results for random fields on the plane. Such fields are one of the most important cases for applications in which M. Yadrenko was interested.

New technologies such as satellite imaging, positron emission tomography and functional magnetic resonance imaging, have provided various spatial data with strong dependence structures. Random fields are flexible mathematical tools to model such data. In this framework, researchers pay particular attention to various specific cases of random fields due to their mathematical tractability. For example, to model statistical properties of various datasets; the second order stationary random fields with long-range dependence have been used in many applications, such as finance, physics, environmental sciences, hydrology, telecommunications and signal processing, see \cite{beran1994statistics,doukhan2002theory,doukhan2002long,palma2007long} and the references therein.

Limit theorems play an important role in many areas of the theory of random fields and stochastic processes. In particular, the asymptotic behaviour of integrals or sums of non-linear functionals of Gaussian processes under long-range dependence have attracted much attention, see~\cite{anh2015rate,anh2017rate,taqqu1975weak, taqqu1979convergence,dobrushin1979non,doukhan2002theory,olenko2013limit,olenko2010limit,bai2013multivariate} and the references therein. The literature on this topic shows numerous examples in which the Hermite expansion was proposed as a suitable tool. In fact, it has been demonstrated that the long-range dependent summands can produce different normalising coefficients and non-Gaussian limits, that are called Hermite or Hermite-Rosenblatt distributions. These results were first obtained by Rosenblatt~\cite{rosenblatt1961independence}. Some classical approaches in asymptotic theory of functionals of random processes and fields with long-range dependence are listed below.

Taqqu~\cite{taqqu1979convergence,taqqu1975weak} discussed an asymptotic behaviour of functionals of the first two Hermite ranks of stationary Gaussian processes. He proved that the limiting processes are non-Gaussian and depend on the Hermite rank of the functionals. Dobrushin and Major~\cite{dobrushin1979non} showed that the normalised sums of stationary random variables are weakly convergent to some self-similar processes that are given in terms of stochastic multiple Wiener-It{\^o} integrals. Furthermore, Rosenblatt~\cite{rosenblatt1981limit} derived limit theorems for partial sums of a non-linear functional of strongly dependent stationary Gaussian sequences. Pipiras and Taqqu~\cite{pipiras2010regularization} presented a different proof for the finite time interval representation of Hermite processes. The result was obtained by regularising both Hermite processes and the fractional Brownian motion instead of cumulants and convergence of partial sums. Pakkanen and R\'{e}veillac~\cite{pakkanen2016functional} derived limit theorems for generalised variations of the fractional Brownian sheet with a general Hermite rank under long-range dependence. They showed that the limiting distribution is a fractional Brownian sheet that is independent and different from the original one. Bai and Taqqu~\cite{bai2013multivariate} applied a multilinear polynomial-form process with regularly varying coefficients to a sequence of independent and identically distributed random variables. They showed that the limit of the normalised partial sums would be a multivariate Gaussian process, or a multivariate Hermite process, or a mixture of both.

Some limit theorems were obtained for weighted functionals of random fields and their ramifications under long-range dependence, see~\cite{ivanov1989statistical,leonenko1999limit, olenko2013limit}. However, much of these asymptotic results are based on either characteristic functions or stochastic integrals representations, see~\cite{leonenko2006weak, leonenko1999limit, taqqu1979convergence, dobrushin1979non, major1981multiple}. The paper by Leonenko and Taufer~\cite{leonenko2006weak} considered an asymptotic distribution of functionals of Gaussian processes with long-range dependence. The asymptotic result was obtained via the characteristic function approach. 

In this paper we study limiting distributions of sums of random fields. Firstly, we prove that sums have the same distribution as the corresponding integrals for the continuous case. Then we demonstrate how to apply this result. The main tool is Lemma 1 in~\cite{leonenko2006weak}, which is modified for the two-dimensional and multidimensional cases. In the paper, we also simplify and clarify conditions and some parts of~\cite{leonenko2006weak}. Also, note that our result is a refined version of Lemma~1 in~\cite{leonenko2006weak} as we consider long-range dependent random fields with general covariance functions (satisfying Assumption 1) while~\cite{leonenko2006weak} only studied the case $B(t)=(1+t^2)^{-\alpha/2}, 0<\alpha<1$. The obtained results can be applied to more general settings than integrals over homothetic regions considered in~\cite{anh2017rate,anh2015rate,leonenko2014sojourn}.

The rest of the paper is organised as follows. In section 2 we present some basic definitions and facts of the spectral and correlation theory of random fields, which will be used extensively in the sequel. In section 3 we prove that the 
integral functionals and the corresponding sums have the same distributions. In section 4 we give a multidimensional version of the results and show their application. Finally, some discussion and open problems are presented in section 5.
\section{Long-range dependent random fields and limit theorems for their functionals}
In this section, definitions, assumptions, and basic results are presented to study the asymptotic behaviour of functionals of random fields with a singular spectrum.

In what follows we denote by $|\cdot|$ and $\Vert \cdot \Vert$ the Lebesque measure and the Euclidean distance in $\mathbb{R}^{n}$, respectively. The symbols $C$ and $\delta$ with subscripts will be used to denote constants that are not important for our discussion. Moreover, it is assumed that all random variables are defined on a fixed probability space $\left(\Omega,\mathfrak{F},\mathbb{P}\right)$.

We consider a measurable mean-square continuous zero-mean homogeneous isotropic real-valued random field $\xi\left(x\right),x\in\mathbb{R}^{n}$, with the covariance function
\begin{equation*}
	B\left(r\right)=\mathbb{E}\left(\xi(0)\xi(x)\right),\quad x\in\mathbb{R}^{n},\quad
	r=\Vert x\Vert.
\end{equation*}
It is well known that there exists a bounded nondecreasing function $\Phi\left(u\right), u\geqslant 0$, (see~\cite{ivanov1989statistical,yadrenko1983spectral}) such that
\begin{equation*}
	B\left(r\right)=\int_{0}^{\infty}Y_{n}\left(ru\right)d\Phi\left(u\right),
\end{equation*}
where the function $Y_{n}\left(\cdot\right), n\geq 1,$ is defined by
\begin{equation*}
	Y_{n}\left(u\right)=2^{(n-2)/2}\Gamma\left(\dfrac{n}{2}\right)J_{(n-2)/2}(u)u^{(2-n)/2},\quad u\geqslant 0,
\end{equation*}
where $J_{(n-2)/2}(\cdot)$ is the Bessel function of the first kind of order $(n-2)/2$, see~\cite{leonenko1999limit,yadrenko1983spectral}. The function $\Phi\left(\cdot\right)$ is called the isotropic spectral measure of the random field $\xi\left(x\right),x\in\mathbb{R}^{n}$. If there exists a function $\varphi(u), u\in[0,\infty)$, such that
\begin{equation*}
	u^{n-1}\varphi(u)\in L_{1}([0,\infty)),\quad\Phi(u)=2\pi^{n/2}/\Gamma(n/2)\int_{0}^{u}z^{n-1}\varphi(z)dz,
\end{equation*}
then the function $\varphi(\cdot)$ is called the isotropic spectral density of the field $\xi\left(x\right)$.

The field $\xi\left(x\right)$ with an absolutely continuous spectrum has the following isonormal spectral representation
\begin{equation*}
	\xi\left(x\right)=\int_{\mathbb{R}^{n}}e^{i\langle\lambda,x \rangle}\sqrt{\varphi(\Vert\lambda\Vert)}W(d\lambda),
\end{equation*}
where $W(\cdot)$ is the complex Gaussian white noise random measure on $\mathbb{R}^{n}$, see~\cite{ivanov1989statistical,leonenko1999limit,yadrenko1983spectral}. 

The Hermite polynomials $H_{m}(x), m\geq 0,$ are defined by 
\begin{align*}
	H_{m}(x)=(-1)^{m}\exp{\left(\dfrac{x^{2}}{2}\right)}\dfrac{d^{m}}{dx^{m}}\exp{\left(-\dfrac{x^{2}}{2}\right)}.
\end{align*}
The Hermite polynomials $H_{m}(x), m\geq 0,$ form a complete orthogonal system in the Hilbert space $L_{2}\left(\mathbb{R},\phi(\omega)d\omega\right)=\left\{G:\int_{\mathbb{R}}G^{2}(\omega)\phi(\omega)d\omega<\infty\right\}$, where $\phi(\omega)$ is the probability density function of the standard normal distribution.

Note, that by (2.1.8)~\cite{leonenko1999limit} we get $\mathbb{E}\left(H_{m}(\xi(x))\right)=0$ and
\begin{equation}\label{eq6}
	\mathbb{E}\left(H_{m_{1}}(\xi(x))H_{m_{2}}(\xi(y))\right)=\delta_{m_{1}}^{m_{2}}m_{1}!B^{m_{1}}(\Vert x-y\Vert),\quad x,y\in\mathbb{R}^{2},
\end{equation}
where $H_{m}(\cdot)$ is the $m$-th Hermite polynomial and $\delta_{m_{1}}^{m_{2}}$ is the Kronecker delta function.

An arbitrary function $G(\omega)\in L_{2}\left(\mathbb{R},\phi(\omega)d\omega\right)$ possesses the mean-square convergent expansion
\begin{equation*}
	G(\omega)=\sum_{j=0}^{\infty}\dfrac{C_{j}H_{j}(\omega)}{j!},\quad C_{j}=\int_{\mathbb{R}}G(\omega)H_{j}(\omega)\phi(\omega)d\omega. 
\end{equation*}
By Parseval's identity 
\begin{equation}\label{eq8}
	\sum_{j=0}^{\infty}\dfrac{C_{j}^{2}}{j!}=\int_{\mathbb{R}}G^{2}(\omega)\phi(\omega)d\omega.
\end{equation}
\begin{definition}\label{def1}{\rm\cite{taqqu1975weak}} Let $G(\omega)\in L_{2}\left(\mathbb{R},\phi(\omega)d\omega\right)$ and there exist an integer $\kappa\geqslant 1$, such that $C_{j}=0$ for all $0<j\leq \kappa-1$, but $C_{\kappa}\neq 0$. Then $\kappa$ is called the Hermite rank of $G(\cdot)$ and is denoted by $H rankG.$
\end{definition}
\begin{definition}\label{def3}{\rm\cite{bingham1989regular}} A measurable function $L:(0,\infty)\rightarrow (0,\infty)$ is said to be slowly varying at infinity if for all $t>0$
	$$\lim_{r\rightarrow \infty} \dfrac{L(tr)}{L(r)} =1.$$
\end{definition}
\begin{assumption}\label{ass1}
	Let $\xi(x), x\in\mathbb{R}^{n}$, be a homogeneous isotropic Gaussian random field with $\mathbb{E}\xi(x)=0$ and the covariance function $B(x)$, such that $B(0)=1$ and
	$$B(x)=\mathbb{E}\left(\xi\left(0\right)\xi\left(x\right)\right)=\Vert x\Vert^{-\alpha}L\left(\Vert x\Vert\right),\quad \alpha>0,$$\
	where $L\left(\Vert\cdot\Vert\right)$ is a function slowly varying at infinity.  
\end{assumption}
If $\alpha\in\left(0,n/\kappa\right)$, where $\kappa$ is the Hermite rank given in Definition \ref{def1}, then the covariance function $B(x)$ satisfying Assumption \ref{ass1} is not integrable, which corresponds to the long-range dependence case~\cite{anh2015rate}.

The notation $\Delta\subset\mathbb{R}^{n}$ will be used to denote a Jordan-measurable convex bounded set, such that $|\Delta|>0$, and $\Delta$ contains the origin in its interior. Let $\Delta(r), r>0$ be the homothetic image of the set $\Delta$, with the centre of homothety at the origin and the coefficient $r>0$, that is $\vert\Delta(r)\vert=r^{n}\vert\Delta\vert$. Let $G(\omega)\in L_{2}\left(\mathbb{R},\phi(\omega)d\omega\right)$ and denote the random variables $K_{\kappa}$ and $K_{r,\kappa}$ by
$$K_{\kappa}=\int_{\bigtriangleup(r)}G\left(\xi\left(x\right)\right)dx\quad and\quad K_{r,\kappa}=\dfrac{C_{\kappa}}{\kappa!}\int_{\bigtriangleup(r)}H_{\kappa}\left(\xi\left(x\right)\right)dx,$$
where $C_{\kappa}$ satisfies (\ref{eq8}).
\begin{theorem}\label{theo1}{\rm\cite{leonenko2014sojourn}}
	Suppose that $\xi\left(x\right), x\in\mathbb{R}^{n}$, satisfies Assumption {\rm\ref{ass1}} and $H rankG(\cdot)=\kappa\geq 1$. If a limit distribution exists for at least one of the random variables
	$$\dfrac{K_{r}}{\sqrt{Var K_{r}}}\quad and\quad \dfrac{K_{r,\kappa}}{\sqrt{Var K_{r,\kappa}}},$$
	then the limit distribution of the other random variable also exists, and the limit distributions coincide when $r\rightarrow \infty$. 
\end{theorem}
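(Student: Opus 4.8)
The plan is to reduce the asymptotics of $K_r$ to that of its leading Hermite component $K_{r,\kappa}$ via an $L_2$-estimate. Expanding $G$ in Hermite polynomials and using that its Hermite rank equals $\kappa$, one has $G(\xi(x))=\sum_{j\ge\kappa}(C_j/j!)H_j(\xi(x))$ in $L_2(\Omega)$; we assume, as is implicit in the statement, that $C_0=\mathbb{E}G(\xi(0))=0$, since otherwise $\mathbb{E}K_r$ grows like $|\Delta(r)|$ and $K_r/\sqrt{\operatorname{Var}K_r}$ has no weak limit. Set $K_{r,j}:=(C_j/j!)\int_{\Delta(r)}H_j(\xi(x))\,dx$, so that $K_r=\sum_{j\ge\kappa}K_{r,j}$ and the $j=\kappa$ term is exactly $K_{r,\kappa}$. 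By the orthogonality relation \eqref{eq6}, $\operatorname{Cov}(K_{r,i},K_{r,j})=0$ for $i\ne j$ and $\mathbb{E}K_{r,j}=0$, hence
\begin{equation*}
	\operatorname{Var}K_r=\operatorname{Var}K_{r,\kappa}+\operatorname{Var}(K_r-K_{r,\kappa}),\qquad \operatorname{Var}(K_r-K_{r,\kappa})=\sum_{j>\kappa}\operatorname{Var}K_{r,j}.
\end{equation*}
The whole argument then comes down to proving $\operatorname{Var}(K_r-K_{r,\kappa})=o(\operatorname{Var}K_{r,\kappa})$ as $r\to\infty$, working in the long-range dependence regime $\kappa\alpha<n$ (cf. the remark after Assumption \ref{ass1}), which is what makes the leading term dominate.

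By \eqref{eq6}, $\operatorname{Var}K_{r,m}=(C_m^2/m!)\,I_m(r)$ with $I_m(r):=\int_{\Delta(r)}\int_{\Delta(r)}B^m(\|x-y\|)\,dx\,dy$, so the key point is the behaviour of $I_m(r)$. Rescaling $x=ru$, $y=rv$ and inserting $B(t)=t^{-\alpha}L(t)$ from Assumption \ref{ass1},
\begin{equation*}
	I_m(r)=r^{2n-m\alpha}L^m(r)\int_{\Delta}\int_{\Delta}\|u-v\|^{-m\alpha}\,\frac{L^m(r\|u-v\|)}{L^m(r)}\,du\,dv .
\end{equation*}
For $m\alpha<n$, the uniform convergence theorem and Potter's bounds for slowly varying functions (\cite{bingham1989regular}) let one pass to the limit under the integral: the ratio tends to $1$, and a dominating function is $\|u-v\|^{-m(\alpha+\varepsilon)}$, integrable over the bounded set $\Delta\times\Delta$ for small $\varepsilon>0$ precisely because $m\alpha<n$. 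Hence $I_m(r)\sim c_m(\Delta)\,r^{2n-m\alpha}L^m(r)$ with $c_m(\Delta)=\int_{\Delta}\int_{\Delta}\|u-v\|^{-m\alpha}\,du\,dv\in(0,\infty)$; taking $m=\kappa$ gives $\operatorname{Var}K_{r,\kappa}\sim(C_\kappa^2/\kappa!)\,c_\kappa(\Delta)\,r^{2n-\kappa\alpha}L^\kappa(r)$, in particular bounded below by a positive multiple of $r^{2n-\kappa\alpha}L^\kappa(r)$ for $r$ large. For $m\alpha\ge n$ the radial function $B^m$ is integrable on $\mathbb{R}^n$ (up to a slowly varying factor $\widetilde{L}$ when $m\alpha=n$), so $I_m(r)\le|\Delta(r)|\int_{\mathbb{R}^n}B^m(\|z\|)\,dz=O(r^n)$, respectively $O(r^n\widetilde{L}(r))$.

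Since $0\le B\le1$, $B^j\le B^{\kappa+1}$ pointwise for $j\ge\kappa+1$, and by Parseval's identity \eqref{eq8}, $\sum_{j>\kappa}C_j^2/j!\le\int_{\mathbb{R}}G^2(\omega)\phi(\omega)\,d\omega<\infty$; therefore $\operatorname{Var}(K_r-K_{r,\kappa})\le\big(\int_{\mathbb{R}}G^2(\omega)\phi(\omega)\,d\omega\big)\,I_{\kappa+1}(r)$. Dividing the bounds on $I_{\kappa+1}(r)$ by $\operatorname{Var}K_{r,\kappa}\asymp r^{2n-\kappa\alpha}L^\kappa(r)$ leaves either $r^{-\alpha}L(r)$ (when $(\kappa+1)\alpha<n$) or $r^{\kappa\alpha-n}\widetilde{L}(r)/L^\kappa(r)$ (when $(\kappa+1)\alpha\ge n$), each of which is a negative power times a slowly varying function and hence tends to $0$. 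Thus $\operatorname{Var}(K_r-K_{r,\kappa})=o(\operatorname{Var}K_{r,\kappa})$, which yields at once $\operatorname{Var}K_r/\operatorname{Var}K_{r,\kappa}\to1$ and, by Chebyshev's inequality, $(K_r-K_{r,\kappa})/\sqrt{\operatorname{Var}K_{r,\kappa}}\to0$ in probability. From the identity
\begin{equation*}
	\frac{K_r}{\sqrt{\operatorname{Var}K_r}}=\sqrt{\frac{\operatorname{Var}K_{r,\kappa}}{\operatorname{Var}K_r}}\left(\frac{K_{r,\kappa}}{\sqrt{\operatorname{Var}K_{r,\kappa}}}+\frac{K_r-K_{r,\kappa}}{\sqrt{\operatorname{Var}K_{r,\kappa}}}\right)
\end{equation*}
together with a Slutsky-type argument (using tightness for the reverse implication), convergence in distribution of either normalized variable transfers to the other with the same limit, which is the assertion.

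I expect the main obstacle to be the asymptotics of $I_m(r)$, namely justifying the interchange of limit and integral uniformly in the slowly varying factor near the diagonal $u=v$; here the finiteness of $c_\kappa(\Delta)$ — equivalently the long-range dependence condition $\kappa\alpha<n$ — and Potter's bounds are essential, and one also has to keep track of the two boundary regimes $(\kappa+1)\alpha\lessgtr n$ in the remainder estimate. (If $\kappa\alpha\ge n$ the covariance is integrable, a central limit theorem applies to every Hermite component, and the conclusion still holds but for a different reason, outside the long-range dependence setting of this paper.)
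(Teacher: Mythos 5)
Your argument is correct and is essentially the proof behind the cited result: the paper itself gives no proof of Theorem~\ref{theo1} (it is quoted from \cite{leonenko2014sojourn}), and the source's proof is exactly this reduction-principle argument — orthogonal Hermite decomposition via (\ref{eq6}), the Karamata/Potter asymptotics $I_m(r)\sim c_m(\Delta)r^{2n-m\alpha}L^m(r)$ for $m\alpha<n$, the remainder bound through $I_{\kappa+1}(r)$ with the two regimes $(\kappa+1)\alpha\lessgtr n$, and Slutsky in both directions. Your explicit handling of the case $C_{0}=0$ and of the boundary case $(\kappa+1)\alpha=n$ is consistent with the long-range dependence setting $\kappa\alpha<n$ assumed throughout the paper.
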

By Theorem \ref{theo1} it is enough to study $K_{r,\kappa}$ to get asymptotic distributions of $K_{\kappa}$. Therefore, we restrict our attention only to $K_{r,\kappa}$.
\begin{assumption}\label{ass2}
	The random field $\xi\left(x\right), x\in\mathbb{R}^{n}$, has the spectral density
	\begin{equation*}
		f\left(\Vert \lambda\Vert\right)=c_{1}\left(n,\alpha\right)\Vert\lambda\Vert^{\alpha-n}L\left(1/\Vert\lambda\Vert\right),
	\end{equation*}
	where $c_{1}\left(n,\alpha\right)=\Gamma\left((n-\alpha)/{2}\right)/2^{\alpha}\pi^{n/2}\Gamma\left(\alpha/2\right),$ and $L(\Vert\cdot\Vert)$ is a locally bounded function which is slowly varying at infinity.
\end{assumption}
One can find more details on relations between Assumptions~\ref{ass1} and \ref{ass2} in \cite{anh2017rate, anh2015rate}.

The function $K_{\Delta}\left(x\right)$ will be used to denote the Fourier transform of the indicator function of the set $\Delta$, i.e,
\begin{equation}\label{eq011}
	K_{\Delta}\left(x\right)=\int_{\Delta}e^{i\langle u,x \rangle}du,\quad x\in\mathbb{R}^{n}.
\end{equation}  
\begin{lemma}\label{lem1}{\rm\cite{leonenko2014sojourn}}
	If $\tau_{1},\dots,\tau_{\kappa},\kappa\geq 1$, are positive constants such that it holds $\sum_{i=1}^{\kappa}\tau_{i}<n$, then
	$$\int_{\mathbb{R}^{n\kappa}}|K_{\Delta}\left(\lambda_{1}+\cdots+\lambda_{\kappa}\right)|^{2}\dfrac{d\lambda_{1}\cdots d\lambda_{\kappa}}{\Vert\lambda_{1}\Vert^{n-\tau_{1}}\cdots\Vert\lambda_{\kappa}\Vert^{n-\tau_{\kappa}}}<\infty.$$
\end{lemma}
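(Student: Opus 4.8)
The plan is to collapse the $\kappa$-fold integral into a single integral against a power weight, by integrating out the sum $\lambda_1+\cdots+\lambda_\kappa$, and then to dispose of that single integral by splitting $\mathbb{R}^n$ into a bounded neighbourhood of the origin and its complement.

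First I would perform the linear change of variables $\mu=\lambda_1+\cdots+\lambda_\kappa$, $\mu_i=\lambda_i$ for $i=1,\dots,\kappa-1$, which has unit Jacobian. Since the integrand is nonnegative, Tonelli's theorem permits integrating in any order and rewriting the quantity in the lemma as
\[
\int_{\mathbb{R}^{n}}|K_\Delta(\mu)|^{2}\,(g_{1}\ast\cdots\ast g_{\kappa})(\mu)\,d\mu,\qquad g_i(\lambda):=\|\lambda\|^{-(n-\tau_i)},
\]
where $\ast$ denotes convolution on $\mathbb{R}^n$. Each $g_i$ is a Riesz kernel with exponent $\tau_i\in(0,n)$, and, crucially, every partial sum obeys $\tau_1+\cdots+\tau_j\le\sum_{i=1}^{\kappa}\tau_i<n$. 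Hence the Riesz composition identity $\|\cdot\|^{-(n-a)}\ast\|\cdot\|^{-(n-b)}=c(n,a,b)\,\|\cdot\|^{-(n-a-b)}$, valid whenever $a,b>0$ and $a+b<n$, can be applied $\kappa-1$ times to give $g_{1}\ast\cdots\ast g_{\kappa}=C\,\|\cdot\|^{-(n-\tau)}$, where $\tau:=\sum_{i=1}^{\kappa}\tau_i\in(0,n)$ and $C=C(n,\tau_1,\dots,\tau_\kappa)$ is finite. This reduces the whole problem to the single estimate $\int_{\mathbb{R}^{n}}|K_\Delta(\mu)|^{2}\|\mu\|^{-(n-\tau)}\,d\mu<\infty$, i.e.\ to the case $\kappa=1$ with weight exponent $\tau$.

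To finish I would split $\mathbb{R}^n=\{\|\mu\|\le1\}\cup\{\|\mu\|>1\}$. On the unit ball use the trivial bound $|K_\Delta(\mu)|\le\int_\Delta du=|\Delta|$ together with $\int_{\|\mu\|\le1}\|\mu\|^{-(n-\tau)}\,d\mu=c_n\int_0^1 r^{\tau-1}\,dr<\infty$, which is finite precisely because $\tau>0$. On the complement use $\|\mu\|^{-(n-\tau)}\le1$ (valid since $n-\tau>0$, i.e.\ $\tau<n$) and $\int_{\|\mu\|>1}|K_\Delta(\mu)|^{2}\,d\mu\le\int_{\mathbb{R}^{n}}|K_\Delta(\mu)|^{2}\,d\mu<\infty$; the last integral is finite by Plancherel's theorem since $\mathbbm{1}_\Delta\in L_2(\mathbb{R}^n)$ (here only boundedness and measurability of $\Delta$ are used, not convexity), and in fact equals $(2\pi)^{n}|\Delta|$. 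Adding the two contributions proves the claim.

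The step that needs the most care is the iterated use of the Riesz composition formula: one must verify that at each of the $\kappa-1$ stages the exponents remain admissible, i.e.\ that the running partial sum $\tau_1+\cdots+\tau_j$ stays strictly positive and strictly below $n$; both hold thanks to $\tau_i>0$ and the hypothesis $\sum_{i=1}^{\kappa}\tau_i<n$, so no stage degenerates. One could instead argue by induction on $\kappa$, integrating out $\lambda_\kappa$ first; but then the function replacing $|K_\Delta|^2$ is $|K_\Delta|^2\ast g_\kappa$, which is only comparable to $(1+\|\cdot\|)^{-(n-\tau_\kappa)}$ rather than an exact power, making the bookkeeping heavier — for that reason I prefer the convolution-of-kernels route above.
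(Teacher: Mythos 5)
Your proof is correct, and it is worth noting that the paper you are working from never proves Lemma~\ref{lem1} at all: the statement is simply quoted from \cite{leonenko2014sojourn}, so your argument is a genuinely self-contained substitute rather than a reproduction. Each step checks out: the change of variables together with Tonelli is legitimate because the integrand is nonnegative; the Riesz composition formula $\Vert\cdot\Vert^{a-n}\ast\Vert\cdot\Vert^{b-n}=c(n,a,b)\Vert\cdot\Vert^{a+b-n}$ is applicable at every one of the $\kappa-1$ stages because each running sum $\tau_1+\cdots+\tau_j$ stays strictly between $0$ and $n$ (all $\tau_i>0$ and the total is below $n$); and the final split at $\Vert\mu\Vert=1$ uses only $\vert K_{\Delta}(\mu)\vert\leq\vert\Delta\vert$, the conditions $0<\tau<n$, and Plancherel, so you are right that convexity of $\Delta$ plays no role here — boundedness and measurability suffice. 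For comparison, the usual argument for integrals of this type (and the one in the spirit of the cited source and of the variance computations behind Theorem~\ref{theo2}) is the Fourier dual of yours: since $\Vert\lambda\Vert^{\tau_i-n}$ is, up to a constant, the Fourier transform of $\Vert x\Vert^{-\tau_i}$, a Parseval-type identity turns the whole expression into $c\int_{\Delta}\int_{\Delta}\Vert x-y\Vert^{-(\tau_1+\cdots+\tau_\kappa)}\,dx\,dy$, which is finite for a bounded set exactly when $\sum_{i=1}^{\kappa}\tau_i<n$. Your convolution-of-Riesz-kernels route performs the same computation entirely on the spectral side: it avoids justifying the distributional Parseval identity, at the price of invoking the composition formula repeatedly; both are standard and equally rigorous.
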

\begin{theorem}\label{theo2}{\rm\cite{leonenko2014sojourn}}
	Let $\xi\left(x\right), x\in\mathbb{R}^{n}$, be a homogeneous isotropic Gaussian random field with $\mathbb{E}\xi\left(x\right)=0$. If Assumptions {\rm\ref{ass1}} and {\rm\ref{ass2}} hold, then for $r \rightarrow\infty$ the random variables 
	$$X_{r,\kappa}(\Delta)=r^{\kappa\alpha/2-n}L^{-\kappa/2}(r)\int_{\Delta(r)}H_{\kappa}\left(\xi(x)\right)dx,$$\
	converge weakly to  
	\begin{equation}\label{eq11}
		X_{\kappa}(\Delta)=c_{1}^{\kappa/2}(n,\alpha)\int_{\mathbb{R}^{n\kappa}}^{'}K_{\Delta}\left(\lambda_{1}+\cdots+\lambda_{\kappa}\right)\dfrac{W(d\lambda_{1})\cdots W(d\lambda_{\kappa})}{\Vert\lambda_{1}\Vert^{(n-\alpha)/2}\cdots\Vert\lambda_{\kappa}\Vert^{(n-\alpha)/2}},
	\end{equation}
	where $\int_{\mathbb{R}^{n\kappa}}^{'}$ denotes the multiple Wiener-It{\^o} integral.
\end{theorem}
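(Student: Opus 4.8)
\medskip
\noindent\textbf{Proof proposal.} The plan is to combine the isonormal spectral representation of $\xi$ with the spectral (multiple Wiener--It\^o) representation of Hermite polynomials, to rescale the homothety via $\lambda_{j}\mapsto\lambda_{j}/r$, and then to show that the rescaled kernels converge in $L_{2}(\mathbb{R}^{n\kappa})$; by the isometry for multiple Wiener--It\^o integrals this will give $L_{2}(\Omega)$-convergence, and hence weak convergence, of $X_{r,\kappa}(\Delta)$ to $X_{\kappa}(\Delta)$.

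First I would use $\xi(x)=\int_{\mathbb{R}^{n}}e^{i\langle\lambda,x\rangle}\sqrt{f(\Vert\lambda\Vert)}\,W(d\lambda)$ together with the spectral representation of Hermite polynomials,
\[
H_{\kappa}(\xi(x))=\int_{\mathbb{R}^{n\kappa}}^{'}e^{i\langle\lambda_{1}+\cdots+\lambda_{\kappa},\,x\rangle}\prod_{j=1}^{\kappa}\sqrt{f(\Vert\lambda_{j}\Vert)}\,W(d\lambda_{1})\cdots W(d\lambda_{\kappa}),
\]
then integrate over $\Delta(r)$, interchange the deterministic and stochastic integrations, and apply $\int_{\Delta(r)}e^{i\langle u,x\rangle}\,du=K_{\Delta(r)}(x)=r^{n}K_{\Delta}(rx)$ (recall (\ref{eq011}) and $|\Delta(r)|=r^{n}|\Delta|$) to obtain
\[
\int_{\Delta(r)}H_{\kappa}(\xi(x))\,dx=r^{n}\int_{\mathbb{R}^{n\kappa}}^{'}K_{\Delta}\bigl(r(\lambda_{1}+\cdots+\lambda_{\kappa})\bigr)\prod_{j=1}^{\kappa}\sqrt{f(\Vert\lambda_{j}\Vert)}\,W(d\lambda_{1})\cdots W(d\lambda_{\kappa}).
\]
Next I would substitute $\mu_{j}=r\lambda_{j}$ and use the self-similarity of Gaussian white noise, $\{W(A/r)\}\overset{d}{=}\{r^{-n/2}W(A)\}$, which after the substitution contributes a factor $r^{-n\kappa/2}$ in distribution; plugging in the form of $f$ from Assumption \ref{ass2}, $f(\Vert\mu_{j}\Vert/r)=c_{1}(n,\alpha)\Vert\mu_{j}\Vert^{\alpha-n}r^{n-\alpha}L(r/\Vert\mu_{j}\Vert)$, and collecting the powers of $r$, one finds that the normalisation $r^{\kappa\alpha/2-n}L^{-\kappa/2}(r)$ cancels the deterministic prefactor exactly and leaves
\[
X_{r,\kappa}(\Delta)\overset{d}{=}c_{1}^{\kappa/2}(n,\alpha)\int_{\mathbb{R}^{n\kappa}}^{'}K_{\Delta}(\mu_{1}+\cdots+\mu_{\kappa})\prod_{j=1}^{\kappa}\frac{\Vert\mu_{j}\Vert^{(\alpha-n)/2}L^{1/2}(r/\Vert\mu_{j}\Vert)}{L^{1/2}(r)}\,W(d\mu_{1})\cdots W(d\mu_{\kappa}).
\]

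By the isometry it then remains to show that these kernels converge in $L_{2}(\mathbb{R}^{n\kappa})$ to the kernel of $X_{\kappa}(\Delta)$ in (\ref{eq11}), equivalently that
\[
\int_{\mathbb{R}^{n\kappa}}|K_{\Delta}(\mu_{1}+\cdots+\mu_{\kappa})|^{2}\prod_{j=1}^{\kappa}\frac{\Vert\mu_{j}\Vert^{\alpha-n}L(r/\Vert\mu_{j}\Vert)}{L(r)}\,d\mu\ \longrightarrow\ \int_{\mathbb{R}^{n\kappa}}|K_{\Delta}(\mu_{1}+\cdots+\mu_{\kappa})|^{2}\prod_{j=1}^{\kappa}\Vert\mu_{j}\Vert^{\alpha-n}\,d\mu.
\]
The integrands converge pointwise by Definition \ref{def3}, and the right-hand side is finite by Lemma \ref{lem1} with $\tau_{i}=\alpha$, which is admissible because long-range dependence forces $\alpha\in(0,n/\kappa)$, so $\sum_{i=1}^{\kappa}\tau_{i}=\kappa\alpha<n$.

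The hard part will be justifying the passage to the limit under the integral sign, since the slowly varying function $L$ need not be bounded. I would control it with Potter's bound (the uniform convergence theorem for slowly varying functions, available globally because $L$ is locally bounded by Assumption \ref{ass2}): for any small $\delta>0$ and all sufficiently large $r$, $L(r/\Vert\mu\Vert)/L(r)\leq C_{\delta}\max(\Vert\mu\Vert^{-\delta},\Vert\mu\Vert^{\delta})$ uniformly in $\mu$. Splitting $\mathbb{R}^{n\kappa}$ according to whether each $\Vert\mu_{j}\Vert$ is at most $1$ or exceeds $1$ then yields an $r$-independent dominating function equal to a finite sum of terms of the form $|K_{\Delta}(\mu_{1}+\cdots+\mu_{\kappa})|^{2}\prod_{j}\Vert\mu_{j}\Vert^{-(n-\tau_{i})}$ with $\tau_{i}=\alpha\pm\delta$; choosing $\delta$ small enough that $\tau_{i}>0$ and $\sum_{i}\tau_{i}<n$ still hold, each such term is integrable by Lemma \ref{lem1}. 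Dominated convergence then gives the required $L_{2}$-convergence of the kernels, whence $X_{r,\kappa}(\Delta)\to X_{\kappa}(\Delta)$ in $L_{2}(\Omega)$, and a fortiori in distribution, as $r\to\infty$.
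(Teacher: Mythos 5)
Your proposal is correct and follows essentially the same route as the source of this statement: the paper itself quotes Theorem \ref{theo2} without proof from \cite{leonenko2014sojourn}, where the argument is exactly yours — the multiple Wiener--It\^{o} (spectral) representation of $H_{\kappa}(\xi(x))$, integration over $\Delta(r)$ with $K_{\Delta(r)}(x)=r^{n}K_{\Delta}(rx)$, the rescaling $\lambda_{j}\mapsto\lambda_{j}/r$ using self-similarity of the white noise, and $L_{2}$-convergence of the kernels via slow variation, a Potter-type bound (using local boundedness of $L$) and the integrability guaranteed by Lemma \ref{lem1} with exponents $\tau_{i}=\alpha\pm\delta$, $\kappa(\alpha+\delta)<n$. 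No gaps worth flagging.
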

\section{Asymptotic distribution of weighted functionals}
There are numerous papers on non-central limit theorems either for integrals or additive functionals of random fields, see, for example, \cite{anh2017rate, anh2015rate, dobrushin1979non,doukhan2002theory,  doukhan2002long, ivanov1989statistical, leonenko1999limit, leonenko2014sojourn, leonenko2006weak, major1981multiple, olenko2013limit, olenko2010limit, rosenblatt1981limit, rosenblatt1961independence, taqqu1979convergence, taqqu1975weak,yadrenko1983spectral}. The presented results below give a rigorous proof that under rather general assumptions, limits coincide for the above functionals. So, numerous existing results can be translated from continuous to discrete settings and vice versa without laborious proofs.

In this section, we present some generalisation of results in~\cite{leonenko2006weak} to random fields on the plane. The main objective of this section is to investigate the integral functional
\begin{equation*}
	X_{m}^{*}\left( T_{1},T_{2}\right)=\dfrac{1}{d_{T_{1},T_{2}}}\int_{0}^{T_{1}}\int_{0}^{T_{2}}g(t_{1},t_{2})H_{m}(\xi(t_{1},t_{2}))dt_{2}dt_{1},
\end{equation*}
as $T_{1},T_{2}\rightarrow\infty$, where  $g(t_{1},t_{2})$ is a non-random function on $[0,T_{1}]\times[0,T_{2}]$, $d_{T_{1},T_{2}}^{-1}$ is a normalizing constant and $H_{m}(\cdot)$ is the $m$-th Hermite polynomial with the leading coefficient equals to one and $\xi(t_{1},t_{2})$ is a homogeneous isotropic random field on $\mathbb{R}^{2}$. We first show that, as $T_{1},T_{2}\rightarrow\infty$, $X_{m}^{*}\left( T_{1},T_{2}\right)$ has the same distribution as the corresponding sums
\begin{equation*}
	\tilde{X}_{m}^{*}\left( T_{1},T_{2}\right)=\dfrac{1}{d_{T_{1},T_{2}}}\sum_{i=0}^{T_{1}-1}\sum_{j=0}^{T_{2}-1}g(i,j)H_{m}(\xi(i,j)),
\end{equation*}
for the discrete case.
\begin{assumption}\label{ass3} Let $g(t_{1},t_{2}), t_{1},t_{2}\in\mathbb{R}$, be such that $T^{4-\alpha m}g^{2}\left(T,T \right)L^{m}(T)\rightarrow\infty,$ as $T\rightarrow\infty,$ and there exist a function $g^{*}(u,v)$ such that
	$$\lim_{T\rightarrow\infty}\bigg\vert\dfrac{g\left(Tu,Tv\right)}{g\left(T,T\right)}-g^{*}(u,v)\bigg\vert\rightarrow 0$$
	uniformly on $u,v\in [0,1]$.
\end{assumption}
\begin{remark}\label{rem1} It follows from Assumption~{\rm\ref{ass3}} that $g^{*}(u,v)$ is bounded on $[0,1]^{2}$.
\end{remark}
\begin{remark}\label{rem2} Note that the conditions on the function $g(\cdot,\cdot)$ in Assumption~{\rm\ref{ass3}} meet various types of functions that are important in solving many statistical problems, in particular, non-linear regression and M estimators. For example, the functions $g(t_{1},t_{2})=t_{1}^{\mu_{1}}t_{2}^{\mu_{2}}$ with $g^{*}(u,v)=u^{\mu_{1}}v^{\mu_{2}}$ and
	$g(t_{1},t_{2})=t_{1}t_{2}\log(\mu_{1}+t_{1})\log(\mu_{2}+t_{2})$ (for some appropriate values of the constants $\mu_{1}$ and $\mu_{2}$) with $g^{*}(u,v)=uv$ can be considered. The case of $g(t_{1},t_{2})=const$ corresponds to classical non-weighted functionals and non-central limit theorems.
\end{remark}
\begin{remark}\label{rem3} To avoid the degenerate cases, the condition $T^{4-\alpha m}g^{2}\left(T,T\right)L^{m}(T)\rightarrow\infty,$ as $T\rightarrow\infty,$ is essential to guarantee the boundedness of the variance of $X_{m}^{*}\left( T_{1},T_{2}\right)d_{T_{1},T_{2}}^{-1}$.
\end{remark}
\begin{theorem}\label{theo3} Let $\tilde{T}=\max({T_{1},T_{2}})$. If Assumptions~{\rm\ref{ass1}} and {\rm\ref{ass3}} hold, and there exist 
	$\lim_{\substack{\tilde{T}\rightarrow\infty}}T_{i}/\tilde{T}, i=1,2,$ then 
	\begin{align}\label{eq13}
		\lim_{\substack{\tilde{T}\rightarrow\infty}}\dfrac{\mathbb{E}\left[\int_{0}^{T_{1}}\int_{0}^{T_{2}}g(t_{1},t_{2})H_{m}(\xi(t_{1},t_{2}))dt_{2}dt_{1}-\sum_{i=0}^{[T_{1}]-1}\sum_{j=0}^{[T_{2}]-1}g(i,j)H_{m}(\xi(i,j))\right]^{2}}{\tilde{T}^{4-\alpha m}g^{2}(\tilde{T},\tilde{T})L^{m}(\tilde{T})}=0,
	\end{align}
	where $0<\alpha<2/m$.
\end{theorem}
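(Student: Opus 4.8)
Set $\tilde T=\max(T_1,T_2)$ and $N_{\tilde T}=\tilde T^{4-\alpha m}g^2(\tilde T,\tilde T)L^m(\tilde T)$, and let $D$ be the random variable whose square occurs in \eqref{eq13}. The double sum equals the integral over $[0,[T_1]]\times[0,[T_2]]$ of the step field $g(\lfloor t_1\rfloor,\lfloor t_2\rfloor)H_m(\xi(\lfloor t_1\rfloor,\lfloor t_2\rfloor))$, so I would write $D=I_1+I_2$ with
\[
I_1=\sum_{i=0}^{[T_1]-1}\sum_{j=0}^{[T_2]-1}\int_{[i,i+1]\times[j,j+1]}\bigl(g(t)H_m(\xi(t))-g(i,j)H_m(\xi(i,j))\bigr)\,dt,
\]
and $I_2=\int_{([0,T_1]\times[0,T_2])\setminus([0,[T_1]]\times[0,[T_2]])}g(t)H_m(\xi(t))\,dt$, the integral over two boundary strips of total Lebesgue measure at most $2\tilde T$. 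Since $\mathbb{E}D^2\le 2\mathbb{E}I_1^2+2\mathbb{E}I_2^2$, it suffices to prove $\mathbb{E}I_1^2=o(N_{\tilde T})$ and $\mathbb{E}I_2^2=o(N_{\tilde T})$. Throughout I would use $\mathbb{E}\bigl(H_m(\xi(x))H_m(\xi(y))\bigr)=m!\,B^m(\|x-y\|)$ from \eqref{eq6} (so $\|H_m(\xi(x))\|_{L_2(\Omega)}=\sqrt{m!}$), the bound $|B(r)|\le B(0)=1$, and the consequence of Assumption~\ref{ass3} and Remark~\ref{rem1} that $|g|\le C\,|g(\tilde T,\tilde T)|$ on $[0,\tilde T]^2$, with $g(\tilde T,\tilde T)\neq 0$, for all large $\tilde T$.

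\emph{Boundary strips.} By \eqref{eq6} and Fubini, $\mathbb{E}I_2^2=m!\iint g(t)g(s)B^m(\|t-s\|)\,dt\,ds$ over the strip region squared, hence
\[
\mathbb{E}I_2^2\le C\,g^2(\tilde T,\tilde T)\,\tilde T\int_{\|z\|\le\sqrt2\,\tilde T}B^m(\|z\|)\,dz .
\]
In polar coordinates, bounding $B^m$ by $1$ near the origin and by $r^{-\alpha m}L^m(r)$ for large $r$ (Assumption~\ref{ass1}), and applying Karamata's theorem to the regularly varying function $r\mapsto r^{1-\alpha m}L^m(r)$ (its index $1-\alpha m>-1$ since $\alpha m<2$; see \cite{bingham1989regular}), one gets $\int_{\|z\|\le\sqrt2\,\tilde T}B^m(\|z\|)\,dz\le C\,\tilde T^{2-\alpha m}L^m(\tilde T)$ for large $\tilde T$. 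Thus $\mathbb{E}I_2^2\le C\,\tilde T^{3-\alpha m}g^2(\tilde T,\tilde T)L^m(\tilde T)=C\,N_{\tilde T}/\tilde T=o(N_{\tilde T})$.

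\emph{Interior cells.} Write $\Delta_{ij}$ for the $(i,j)$-th summand of $I_1$; Minkowski's integral inequality and the bounds above give $\|\Delta_{ij}\|_{L_2(\Omega)}\le C\,|g(\tilde T,\tilde T)|$. In $\mathbb{E}I_1^2=\sum_{i,j}\sum_{k,l}\mathbb{E}(\Delta_{ij}\Delta_{kl})$ I would split the pairs by whether $\|w\|:=\|(i,j)-(k,l)\|$ is $\le R_0$ or $>R_0$. For the $O(\tilde T^2)$ "near" pairs, $|\mathbb{E}(\Delta_{ij}\Delta_{kl})|\le\|\Delta_{ij}\|_{L_2}\|\Delta_{kl}\|_{L_2}\le C\,g^2(\tilde T,\tilde T)$, contributing $O(g^2(\tilde T,\tilde T)\tilde T^2)=o(N_{\tilde T})$. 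For a "far" pair, \eqref{eq6} expands $\mathbb{E}(\Delta_{ij}\Delta_{kl})$ into $m!$ times the integral over $u,v\in[0,1]^2$ of
\[
g(t)g(s)B^m(\|t-s\|)-g(t)g(k,l)B^m(\|t-(k,l)\|)-g(i,j)g(s)B^m(\|(i,j)-s\|)+g(i,j)g(k,l)B^m(\|(i,j)-(k,l)\|),
\]
with $t=(i,j)+u$, $s=(k,l)+v$, all four distances lying in $[\|w\|-2\sqrt2,\|w\|+2\sqrt2]$. Dividing by $g^2(\tilde T,\tilde T)B^m(\|w\|)$, the bracket becomes $ac\beta_1-ad\beta_2-bc\beta_3+bd\beta_4$ where, by the uniform convergence in Assumption~\ref{ass3} and the continuity of $g^*$, the factors $a,b,c,d$ are bounded and the cell oscillations $a-b$, $c-d$ are $o(1)$, while by the Uniform Convergence Theorem applied to the regularly varying $B^m$ (index $-\alpha m$; \cite{bingham1989regular}) each $\beta_i-1$ is $o(1)$ uniformly as $\|w\|\to\infty$; writing $ac\beta_1-ad\beta_2-bc\beta_3+bd\beta_4=(a-b)(c-d)\beta_4+ac(\beta_1-\beta_4)-ad(\beta_2-\beta_4)-bc(\beta_3-\beta_4)$ then yields
\[
|\mathbb{E}(\Delta_{ij}\Delta_{kl})|\le g^2(\tilde T,\tilde T)\,\theta(\tilde T,\|w\|)\,\|w\|^{-\alpha m}L^m(\|w\|),
\]
with $\theta$ bounded and $\sup\{\theta(\tilde T,\|w\|):\ \|w\|\ge R,\ \tilde T\ge R\}\to 0$ as $R\to\infty$. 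Summing over the $\le\tilde T^2$ cells and over $\|w\|\le 2\tilde T$, using $\sum_{\|w\|\le 2\tilde T}\|w\|^{-\alpha m}L^m(\|w\|)\le C\,\tilde T^{2-\alpha m}L^m(\tilde T)$ (Karamata, $\alpha m<2$) and splitting the $\|w\|$-range at the threshold past which $\theta<\varepsilon$, the far-pair contribution is at most $C_\varepsilon\,g^2(\tilde T,\tilde T)\tilde T^2+\varepsilon\,C\,N_{\tilde T}$ for large $\tilde T$. Hence $\limsup_{\tilde T\to\infty}\mathbb{E}I_1^2/N_{\tilde T}\le\varepsilon C$ for every $\varepsilon>0$, so $\mathbb{E}I_1^2=o(N_{\tilde T})$, and \eqref{eq13} follows.

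\emph{Main obstacle.} The far-pair bound for $I_1$ is the delicate step: with the crude estimates $|g|\lesssim g(\tilde T,\tilde T)$ and $B^m(\|w\|)\lesssim\|w\|^{-\alpha m}L^m(\|w\|)$ the far sum is of order exactly $N_{\tilde T}$, not $o(N_{\tilde T})$, so one must extract a genuinely vanishing factor, uniformly in the cell positions and in $\|w\|$. This is exactly where the uniformity in Assumption~\ref{ass3} and the Uniform Convergence Theorem of Karamata theory are indispensable, the gain coming from the fact that both $g$ and the covariance power $B^m$ are essentially constant on the scale of a single unit cell.
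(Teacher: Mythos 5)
Your plan is essentially correct but follows a genuinely different route from the paper. The paper first removes the non-integer boundary strips, then (for integer $T_1,T_2$) expands the square into three global terms $D^{(1)}_{T_1,T_2},D^{(2)}_{T_1,T_2},D^{(3)}_{T_1,T_2}$ (integral--integral, cross, sum--sum), rescales each by the substitution $x=\tilde Tu$, adds and subtracts $g^{*}$, and shows that each term equals $m!\,\tilde T^{4-\alpha m}g^{2}(\tilde T,\tilde T)L^{m}(\tilde T)\,(l_{1,2}+o(1))$ with signs $+1,-2,+1$; the conclusion comes from exact cancellation of the common constant $l_{1,2}$, and this is the only place where the hypothesis that $\lim_{\tilde T\to\infty}T_i/\tilde T$ exists is used (it defines $l_{1,2}$). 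You instead never identify a limiting constant: you cancel locally, cell by cell, bounding $\mathbb{E}(\Delta_{ij}\Delta_{kl})$ for far pairs by a factor that vanishes uniformly (unit-cell oscillation of the rescaled $g$ plus the Uniform Convergence Theorem for the regularly varying $B^{m}$), and disposing of strips and near pairs by Cauchy--Schwarz and Karamata-type bounds, exactly as you say using $\alpha m<2$. What each approach buys: the paper's scheme is the classical one and produces the constant $l_{1,2}$ explicitly; yours does not need the existence of the limits $T_i/\tilde T$ at all, and is more quantitative in spirit (the strip and near-pair contributions come with explicit powers of $\tilde T$), which is relevant to the paper's stated open problem on rates of convergence.

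One step of yours is not covered by the stated hypotheses: you invoke ``the continuity of $g^{*}$'' to conclude that the cell oscillations $a-b$, $c-d$ are uniformly $o(1)$. Assumption \ref{ass3} and Remark \ref{rem1} give only uniform convergence and boundedness of $g^{*}$; $g^{*}$ may be discontinuous (e.g.\ $g(t_1,t_2)=\mathbbm{1}[t_1\ge t_2]$ satisfies Assumption \ref{ass3} with $g^{*}(u,v)=\mathbbm{1}[u\ge v]$, and cells straddling the diagonal have oscillation $1$). So, as written, your far-pair estimate needs either the extra hypothesis that $g^{*}$ is (uniformly) continuous on $[0,1]^{2}$, or an additional argument that the exceptional cells are few enough. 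You should not regard this as a defect peculiar to your route, though: the paper's own proof needs comparable unstated regularity when it identifies the lattice Riemann sums in $\hat{I_3}$ and $D^{(3)}_{T_1,T_2}$ with the integral defining $l_{1,2}$, and some such condition is genuinely necessary --- a zero-degree homogeneous $g$ equal to $2$ on rays of rational slope and $1$ elsewhere satisfies Assumptions \ref{ass1} and \ref{ass3} verbatim while \eqref{eq13} fails for it. With continuity (or a.e.\ continuity/Riemann integrability) of $g^{*}$ added explicitly, your argument goes through.
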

\begin{proof}
	Note that we can estimate the numerator in (\ref{eq13}) as
	\begin{align*}
		\mathbb{E}&\bigg[\int_{[T_{1}]}^{T_{1}}\int_{0}^{T_{2}}g(t_{1},t_{2})H_{m}(\xi(t_{1},t_{2}))dt_{1}dt_{2}+\int_{0}^{[T_{1}]}\int_{[T_{2}]}^{T_{2}}g(t_{1},t_{2})H_{m}(\xi(t_{1},t_{2}))dt_{1}dt_{2}\\
		&+\int_{0}^{[T_{1}]}\int_{0}^{[T_{2}]}g(t_{1},t_{2})H_{m}(\xi(t_{1},t_{2}))dt_{1}dt_{2}-\sum_{i=0}^{[T_{1}]-1}\sum_{j=0}^{[T_{2}]-1}g(i,j)H_{m}(\xi(i,j)\bigg]^{2}\\
		&\leq 4\mathbb{E}\bigg[\int_{[T_{1}]}^{T_{1}}\int_{0}^{T_{2}}g(t_{1},t_{2})H_{m}(\xi(t_{1},t_{2}))dt_{1}dt_{2}\bigg]^{2}\\
		&+4\mathbb{E}\bigg[\int_{0}^{[T_{1}]}\int_{[T_{2}]}^{T_{2}}g(t_{1},t_{2})H_{m}(\xi(t_{1},t_{2}))dt_{1}dt_{2}\bigg]^{2}\\
		&+2\mathbb{E}\bigg[\int_{0}^{[T_{1}]}\int_{0}^{[T_{2}]}g(t_{1},t_{2})H_{m}(\xi(t_{1},t_{2}))dt_{1}dt_{2}-\sum_{i=0}^{[T_{1}]-1}\sum_{j=0}^{[T_{2}]-1}g(i,j)H_{m}(\xi(i,j)\bigg]^{2}.
	\end{align*}
	By (\ref{eq6}), we get
	\begin{align}\label{eq14}
		\dfrac{\mathbb{E}\bigg[\int_{[T_{1}]}^{T_{1}}\int_{0}^{T_{2}}g(t_{1},t_{2})H_{m}(\xi(t_{1},t_{2}))dt_{1}dt_{2}\bigg]^{2}}{\tilde{T}^{4-\alpha m}g^{2}(\tilde{T},\tilde{T})L^{m}(\tilde{T})}=\int_{[T_{1}]}^{T_{1}}\int_{[T_{1}]}^{T_{1}}\int_{0}^{T_{2}}\int_{0}^{T_{2}}g(t_{1},t_{2})g(t^{'}_{1},t^{'}_{2})\notag\\
		\times\dfrac{B^{m}(\Vert(t_{1}-t^{'}_{1},t_{2}-t^{'}_{2})\Vert)dt_{1}dt^{'}_{1}dt_{2}dt^{'}_{2}}{\tilde{T}^{4-\alpha m}g^{2}(\tilde{T},\tilde{T})L^{m}(\tilde{T})}\notag\\
		\leq\int_{[T_{1}]}^{T_{1}}\int_{[T_{1}]}^{T_{1}}\int_{0}^{1}\int_{0}^{1}\dfrac{\vert g(t_{1},u_{2}T_{2})g(t^{'}_{1},u^{'}_{2}T_{2})\vert dt_{1}dt^{'}_{1}du_{2}du^{'}_{2}}{\tilde{T}^{2-\alpha m}g^{2}(\tilde{T},\tilde{T})L^{m}(\tilde{T})}\notag\\
		\leq\dfrac{1}{\tilde{T}^{2-\alpha m}L^{m}(\tilde{T})}\sup_{\substack{u_{1},u_{2},    u_{1}',u_{2}'\in[0,1]\\}}\dfrac{\vert g(u_{1}\tilde{T},u_{2}\tilde{T})g(u^{'}_{1}\tilde{T},u^{'}_{2}\tilde{T})\vert}{g^{2}(\tilde{T},\tilde{T})}.
	\end{align}
	By Assumption \ref{ass3} for an arbitrary $\epsilon>0$ there exists $\tilde{T}_{0}$ such that for $\tilde{T}>\tilde{T}_{0}$
	\begin{align*}
		\sup_{\substack{u_{1},u_{2}\in[0,1]\\}}\vert\dfrac{g(u_{1}\tilde{T},u_{2}\tilde{T})}{g(\tilde{T},\tilde{T})}\vert\leq\sup_{\substack{u_{1},u_{2}\in[0,1]\\}}\vert
		g^{*}(u_{1},u_{2})\vert+\epsilon.
	\end{align*}
	Hence, the upper bound of (\ref{eq14}) approaches $0$ when $\tilde{T}\longrightarrow\infty$.
	
	Similarly, one obtains that
	\begin{align*}
		\dfrac{\mathbb{E}\bigg[\int_{0}^{[T_{1}]}\int_{[T_{2}]}^{T_{2}}g(t_{1},t_{2})H_{m}(\xi(t_{1},t_{2}))dt_{1}dt_{2}\bigg]^{2}}{\tilde{T}^{4-\alpha m}g^{2}(\tilde{T},\tilde{T})L^{m}(\tilde{T})}\longrightarrow 0,
	\end{align*}
	when $\tilde{T}\longrightarrow\infty$.
	
	Hence, without lost of generality, we consider the case of integers $T_{1}$ and $T_{2}$. To simplify the calculations, let us denote the numerator in (\ref{eq13}) by $D_{T_{1},T_{2}}$, then
	\begin{align*}
		D_{T_{1},T_{2}}=\mathbb{E}\bigg[\sum_{i=0}^{T_{1}-1}\sum_{j=0}^{T_{2}-1}\int_{\left[0,1\right]^{2}}\lbrace g(x+i,y+j)H_{m}(\xi(x+i,y+j))\\-g(i,j)H_{m}(\xi(i,j))\rbrace dydx\bigg]^{2}.
	\end{align*}
	Expanding the right hand side one gets
	\begin{align}\label{eq15}
		D_{T_{1},T_{2}}=\sum_{k=1}^{3}D_{T_{1},T_{2}}^{(k)},
	\end{align}
	where
	\begin{align*}    
		D_{T_{1},T_{2}}^{(1)}&=\mathbb{E}\sum_{i,i'=0}^{T_{1}-1}\sum_{j,j'=0}^{T_{2}-1}\int_{\left[0,1\right]^{4}}g(x+i,y+j)g(x'+i',y'+j')H_{m}(\xi(x+i,y+j))\\
		&\times H_{m}(\xi(x'+i',y'+j'))dydy'dxdx',\\
		D_{T_{1},T_{2}}^{(2)}&=-2\mathbb{E}\sum_{i,i'=0}^{T_{1}-1}\sum_{j,j'=0}^{T_{2}-1}\int_{\left[0,1\right]^{2}}g(x+i',y+j')g(i,j)\\&\times H_{m}(\xi(x+i',y+j'))H_{m}(\xi(i,j))dydx,
	\end{align*}    
	and
	\begin{align*}
		D_{T_{1},T_{2}}^{(3)}=\mathbb{E}\sum_{i,i'=0}^{T_{1}-1}\sum_{j,j'=0}^{T_{2}-1}g(i,j)g(i',j')H_{m}(\xi(i,j))H_{m}(\xi(i',j')).
	\end{align*}    
	Now, using (\ref{eq6}) and Assumption 1 we can rewrite the first term in (\ref{eq15}) as follows
	\begin{align*}
		D_{T_{1},T_{2}}^{(1)} &= m!\sum_{i,i'=0}^{T_{1}-1}\sum_{j,j'=0}^{T_{2}-1}\int_{\left[0,1\right]^{4}}g(x+i,y+j)g(x'+i',y'+j')\\
		&\times B^{m}(\Vert (x'+i'-(x+i),y'+j'-(y+j))\Vert)dydy'dxdx'\\
		&=m!\int_{\left[0,T_{1}\right]^{2}}\int_{\left[0,T_{2}\right]^{2}}g(x,y)g(x',y')B^{m}(\Vert (x'-x,y'-y)\Vert)dydy'dxdx'\\
		&=m!\int_{\left[0,T_{1}\right]^{2}}\int_{\left[0,T_{2}\right]^{2}}\dfrac{g(x,y)g(x',y')L^{m}(\Vert (x'-x,y'-y)\Vert)dydy'dxdx'}{\left((x'-x)^{2}+(y'-y)^{2}\right)^{\alpha m/2}}.
	\end{align*}
	Using the following transformation
	\begin{align}\label{eq16}
		\tilde{T}u_{1}=x,\tilde{T}u_{2}=y, \tilde{T}v_{1}=x', and\quad  \tilde{T}v_{2}=y',
	\end{align}
	and elementary computations, we obtain
	\begin{align*}
		D_{T_{1},T_{2}}^{(1)}&= m!\tilde{T}^{4-m\alpha}\int_{\left[0,T_{1}\tilde{T}^{-1}\right]^{2}}\int_{\left[0,T_{2}\tilde{T}^{-1}\right]^{2}}\dfrac{g(\tilde{T}u_{1},\tilde{T}u_{2})g(\tilde{T}v_{1},\tilde{T}v_{2})}{((v_{1}-u_{1})^{2}+(v_{2}-u_{2})^{2})^{\alpha m/2}}\notag\\
		&\times L^{m}(\tilde{T}\Vert (u_{1}-v_{1},u_{2}-v_{2})\Vert)du_{1}dv_{1}du_{2}dv_{2}.
	\end{align*}
	Multiplying and dividing by $g^{2}(\tilde{T},\tilde{T})$, we obtain
	\begin{align*}
		D_{T_{1},T_{2}}^{(1)}&= m!\tilde{T}^{4-m\alpha}g^{2}(\tilde{T},\tilde{T})
		\int_{\left[0,T_{1}\tilde{T}^{-1}\right]^{2}}\int_{\left[0,T_{2}\tilde{T}^{-1}\right]^{2}}\dfrac{g(\tilde{T}u_{1},\tilde{T}u_{2})g(\tilde{T}v_{1},\tilde{T}v_{2})}{g^{2}(\tilde{T},\tilde{T})((v_{1}-u_{1})^{2}+(v_{2}-u_{2})^{2})^{\alpha m/2}}\\
		&\times L^{m}(\tilde{T}\Vert (u_{1}-v_{1},u_{2}-v_{2})\Vert)du_{1}dv_{1}du_{2}dv_{2}.
	\end{align*}
	Adding and subtracting either $g^{*}(u_{1},u_{2})$ or $g^{*}(v_{1},v_{2})$ inside the integrals, then we have
	\begin{align}\label{eq18}
		D_{T_{1},T_{2}}^{(1)}= m!\tilde{T}^{4-m\alpha}g^{2}(\tilde{T},\tilde{T})(I_{1}+2I_{2}+I_{3}),
	\end{align}
	where
	\begin{align*}
		I_{1}&=\int_{\left[0,T_{1}\tilde{T}^{-1}\right]^{2}}\int_{\left[0,T_{2}\tilde{T}^{-1}\right]^{2}}\left[\dfrac{g(\tilde{T}u_{1},\tilde{T}u_{2})}{g(\tilde{T},\tilde{T})}-g^{*}(u_{1},u_{2})\right] \left[ \dfrac{g(\tilde{T}v_{1},\tilde{T}v_{2})}{g(\tilde{T},\tilde{T})}-g^{*}(v_{1},v_{2})\right]\\
		&\times\frac{L^{m}(\tilde{T}\Vert (u_{1}-v_{1},u_{2}-v_{2})\Vert)du_{1}dv_{1}du_{2}dv_{2}}{((v_{1}-u_{1})^{2}+(v_{2}-u_{2})^{2})^{\alpha m/2}},\\
		I_{2}&=\int_{\left[0,T_{1}\tilde{T}^{-1}\right]^{2}}\int_{\left[0,T_{2}\tilde{T}^{-1}\right]^{2}}\left[\dfrac{g(\tilde{T}u_{1},\tilde{T}u_{2})}{g(\tilde{T},\tilde{T})}-g^{*}(u_{1},u_{2})\right]g^{*}(v_{1},v_{2})\\
		&\times\frac{L^{m}(\tilde{T}\Vert (u_{1}-v_{1},u_{2}-v_{2})\Vert)du_{1}dv_{1}du_{2}dv_{2}}{((v_{1}-u_{1})^{2}+(v_{2}-u_{2})^{2})^{\alpha m/2}},
	\end{align*}
	and
	\begin{align*}
		I_{3}=\int_{\left[0,T_{1}\tilde{T}^{-1}\right]^{2}}\int_{\left[0,T_{2}\tilde{T}^{-1}\right]^{2}}\dfrac{g^{*}(u_{1},u_{2})g^{*}(v_{1},v_{2})L^{m}(\tilde{T}\Vert (u_{1}-v_{1},u_{2}-v_{2})\Vert) du_{1}dv_{1}du_{2}dv_{2}}{((v_{1}-u_{1})^{2}+(v_{2}-u_{2})^{2})^{\alpha m/2}}.
	\end{align*}
	We will analyze each term $I_{i},i=1,2,3$, separately. The term $I_{1}$ can be estimated as
	\begin{align*}
		I_{1}&\leq\int_{\left[0,T_{1}\tilde{T}^{-1}\right]^{2}}\int_{\left[0,T_{2}\tilde{T}^{-1}\right]^{2}}\bigg\vert
		\dfrac{g(\tilde{T}u_{1},\tilde{T}u_{2})}{g(\tilde{T},\tilde{T})}-g^{*}(u_{1},u_{2})\bigg\vert\bigg\vert \dfrac{g(\tilde{T}v_{1},\tilde{T}v_{2})}{g(\tilde{T},\tilde{T})}-g^{*}(v_{1},v_{2})\bigg\vert\\
		&\times
		\frac{L^{m}(\tilde{T}\Vert (u_{1}-v_{1},u_{2}-v_{2})\Vert)du_{1}dv_{1}du_{2}dv_{2}}{((v_{1}-u_{1})^{2}+(v_{2}-u_{2})^{2})^{\alpha m/2}}.
	\end{align*}
	Note that $[0,T_{1}\tilde{T}^{-1}]\subseteq [0,1]$ and $[0,T_{1}\tilde{T}^{-1}]\subseteq [0,1]$. Hence,
	\begin{align*}
		I_{1}&\leq\int_{\left[0,1\right]^4}\bigg\vert
		\dfrac{g(\tilde{T}u_{1},\tilde{T}u_{2})}{g(\tilde{T},\tilde{T})}-g^{*}(u_{1},u_{2})\bigg\vert\bigg\vert\dfrac{g(\tilde{T}v_{1},\tilde{T}v_{2})}{g(\tilde{T},\tilde{T})}-g^{*}(v_{1},v_{2})\bigg\vert\\
		&\times\frac{L^{m}(\tilde{T}\Vert (u_{1}-v_{1},u_{2}-v_{2})\Vert)du_{1}dv_{1}du_{2}dv_{2}}{((v_{1}-u_{1})^{2}+(v_{2}-u_{2})^{2})^{\alpha m/2}}.
	\end{align*}
	Note that for $\alpha_{0}<1:$
	\begin{align}\label{eq19}
		\int_{\left[0,1\right]^4}\dfrac{du_{1}dv_{1}du_{2}dv_{2}}{((v_{1}-u_{1})^{2}+(v_{2}-u_{2})^{2})^{\alpha_{0}}}&\leq \int_{[0,1]^4}\dfrac{du_{1}dv_{1}du_{2}dv_{2}}{\big(2\vert v_{1}-u_{1}\vert\vert v_{2}-u_{2}\vert\big)^{\alpha_{0}}}\notag\\
		&=\dfrac{1}{2^{\alpha_{0}}}\left(\int_{[0,1]^2}\dfrac{du_{1}}{u^{\alpha_{0}}_{1}}\dfrac{dv_{1}}{\bigg\vert1-\dfrac{v_{1}}{u_{1}}\bigg\vert^{\alpha_{0}}}\right)^{2}<\infty.
	\end{align}
	Hence, we have 
	\begin{align*}
		I_{1}\leq&\sup_{\substack{u_{1}, u_{2}\in[0,1]\\}}
		\bigg\vert\dfrac{g(\tilde{T}u_{1},\tilde{T}u_{2})}{g(\tilde{T},\tilde{T})}-g^{*}(u_{1},u_{2})\bigg\vert^{2}\\
		&\times\int_{\left[0,1\right]^{4}}\frac{L^{m}(\tilde{T}\Vert (u_{1}-v_{1},u_{2}-v_{2})\Vert)du_{1}dv_{1}du_{2}dv_{2}}{((v_{1}-u_{1})^{2}+(v_{2}-u_{2})^{2})^{\alpha m/2}}.
	\end{align*}
	It follows from Assumption \ref{ass1} that $L(\cdot)$ is locally bounded and by Theorem 1.5.3~\cite{bingham1989regular} for an arbitrary $\delta>0$ there exist $\tilde{T}_{0}$ and $C>0$ such that for all $\tilde{T}>\tilde{T}_{0}$
	\begin{align*}
		\sup_{\substack{0<s<\tilde{T}\\}}\dfrac{s^{\delta}L(s)}{\tilde{T}^{\delta} L(\tilde{T})}\leq C.
	\end{align*}
	Therefore, for all $\tilde{T}>\tilde{T}_{0}$
	\begin{align*}
		\int_{\left[0,1\right]^{4}}\frac{L^{m}(\tilde{T}\Vert (u_{1}-v_{1},u_{2}-v_{2})\Vert)du_{1}dv_{1}du_{2}dv_{2}}{((v_{1}-u_{1})^{2}+(v_{2}-u_{2})^{2})^{\alpha m/2}}=\int_{\left[0,1\right]^{4}}\dfrac{(\tilde{T}\Vert (u_{1}-v_{1},u_{2}-v_{2})\Vert)^{\delta}}{\tilde{T}^{\delta}}\\
		\times\frac{L^{m}(\tilde{T}\Vert (u_{1}-v_{1},u_{2}-v_{2})\Vert)du_{1}dv_{1}du_{2}dv_{2}}{((v_{1}-u_{1})^{2}+(v_{2}-u_{2})^{2})^{\frac{\delta+m\alpha}{2}}}\\
		\leq CL^{m}(\tilde{T}) \int_{\left[0,1\right]^{4}}\dfrac{du_{1}dv_{1}du_{2}dv_{2}}{((v_{1}-u_{1})^{2}+(v_{2}-u_{2})^{2})^{\frac{\delta+m\alpha}{2}}}.
	\end{align*}
	Using this upper bound, (\ref{eq19}), and selecting $\delta$ such that $\frac{\delta+m\alpha}{2}<1$ we obtain
	\begin{align*}
		I_{1}\leq L^{m}(\tilde{T})o(1),\quad\tilde{T}\longrightarrow\infty.
	\end{align*}
	Similarly, using Remark \ref{rem1} we get
	\begin{align*}
		I_{2}\leq &\int_{\left[0,T_{1}\tilde{T}^{-1}\right]^2}\int_{\left[0,T_{2}\tilde{T}^{-1}\right]^2}\bigg\vert\dfrac{g(\tilde{T}u_{1},\tilde{T}u_{2})}{g(\tilde{T},\tilde{T})}-g^{*}(u_{1},u_{2})\bigg\vert \vert g^{*}(v_{1},v_{2})\vert\\
		&\times \dfrac{L^{m}(\tilde{T}\Vert (u_{1}-v_{1},u_{2}-v_{2})\Vert)du_{1}dv_{1}du_{2}dv_{2}}{((v_{1}-u_{1})^{2}+(v_{2}-u_{2})^{2})^{\alpha m/2}}\\
		&\leq\int_{\left[0,1\right]^4}\bigg\vert\dfrac{g(\tilde{T}u_{1},\tilde{T}u_{2})}{g(\tilde{T},\tilde{T})}-g^{*}(u_{1},u_{2})\bigg\vert \vert g^{*}(v_{1},v_{2})\vert\\
		&\times \dfrac{L^{m}(\tilde{T}\Vert (u_{1}-v_{1},u_{2}-v_{2})\Vert)du_{1}dv_{1}du_{2}dv_{2}}{((v_{1}-u_{1})^{2}+(v_{2}-u_{2})^{2})^{\alpha m/2}}\\
		& \leq \sup_{\substack{u_{1},u_{2}\in[0,1]}}
		\bigg\vert\dfrac{g(\tilde{T}u_{1},\tilde{T}u_{2})}{g(\tilde{T},\tilde{T})}-g^{*}(u_{1},u_{2})\bigg\vert\\ 
		&\times \int_{\left[0,1\right]^{4}}\frac{\vert g^{*}(v_{1},v_{2})\vert L^{m}(\tilde{T}\Vert (u_{1}-v_{1},u_{2}-v_{2})\Vert)du_{1}dv_{1}du_{2}dv_{2}}{((v_{1}-u_{1})^{2}+(v_{2}-u_{2})^{2})^{\alpha m/2}}.
	\end{align*}
	As $g^{*}(\cdot,\cdot)$ is bounded on $\left[0,1\right]^{2}$, by the same reasons as for $I_{1}$ we obtain $I_{2}\leq L^{m}(\tilde{T})o(1),$ when $\tilde{T}\longrightarrow\infty$.
	
	Note that  \[\int_{\left[0,a_{1}\right]^{2}}\int_{\left[0,a_{2}\right]^{2}}\dfrac{g^{*}(u_{1},u_{2})g^{*}(v_{1},v_{2})du_{1}dv_{1}du_{2}dv_{2}}{((v_{1}-u_{1})^{2}+(v_{2}-u_{2})^{2})^{\frac{\delta+\alpha m}{2}}}<\infty.\]
	Hence, analogously to Proposition~4.1.2~\cite{bingham1989regular} we obtain 
	\begin{align*}
		I_{3}=&\int_{\left[0,T_{1}\tilde{T}^{-1}\right]^2}\int_{\left[0,T_{2}\tilde{T}^{-1}\right]^2}\dfrac{g^{*}(u_{1},u_{2})g^{*}(v_{1},v_{2})L^{m}(\tilde{T}\Vert (u_{1}-v_{1},u_{2}-v_{2})\Vert) du_{1}dv_{1}du_{2}dv_{2}}{((v_{1}-u_{1})^{2}+(v_{2}-u_{2})^{2})^{\alpha m/2}}\\
		&\sim l_{1,2}L^{m}(\tilde{T}),\quad\tilde{T}\longrightarrow\infty,
	\end{align*}
	where
	\begin{align*}
		l_{1,2}=\int_{\left[0,a_{1}\right]^2}\int_{\left[0,a_{2}\right]^2}\dfrac{ g^{*}(u_{1},u_{2})g^{*}(v_{1},v_{2}) du_{1}dv_{1}du_{2}dv_{2}}{((v_{1}-u_{1})^{2}+(v_{2}-u_{2})^{2})^{\alpha m/2}},
	\end{align*}
	and $a_{i}=\lim_{\tilde{T}\longrightarrow\infty} T_{i}/\tilde{T}, i=1,2$.
	
	Notice, that $\vert l_{1,2}\vert<\infty$ by Remark \ref{rem1} and (\ref{eq19}). Therefore, by combining these results and (\ref{eq18}), we have
	\begin{align*}
		D_{T_{1},T_{2}}^{(1)}= m!\tilde{T}^{4-m\alpha}g^{2}(\tilde{T},\tilde{T})L^{m}(\tilde{T})(l_{1,2}+o(1)),\quad T_{1}, T_{2}\longrightarrow\infty.
	\end{align*}
	Now, we consider the second term $D_{T_{1},T_{2}}^{(2)}$:
	\begin{align*}
		D_{T_{1},T_{2}}^{(2)}&=-2m!\sum_{i,i'=0}^{T_{1}-1}\sum_{j,j'=0}^{T_{2}-1}\int\limits_{\left[0,1\right]^{2}}g(x+i',y+j')g(i,j)\\&\times B^{m}(\Vert(i-(x+i'),j-(y+j'))\Vert)dydx\\
		& =-2 m!\int_{[0,T_{1}]}\int_{[0,T_{2}]}g(x,y)\sum_{i=0}^{T_{1}-1}\sum_{j=0}^{T_{2}-1} g(i,j)B^{m}(\Vert (i-x,j-y)\Vert)dydx\\
		&=-2m!\int_{[0,T_{1}]}\int_{[0,T_{2}]}\sum_{i=0}^{T_{1}-1}\sum_{j=0}^{T_{2}-1}\dfrac{g(x,y)g(i,j)L^{m}(\Vert (i-x,j-y)\Vert)dydx}{\left((i-x)^{2}+(j-y)^{2}\right)^{\alpha m/2}}.
	\end{align*}
	Using the transformation (\ref{eq16}) again, one obtains
	\begin{align*}
		D_{T_{1},T_{2}}^{(2)}&=-2 m!\tilde{T}^{4-m\alpha}
		\int_{[0,T_{1}\tilde{T}^{-1}]}\int_{[0,T_{2}\tilde{T}^{-1}]}\sum_{i=0}^{T_{1}-1}\sum_{j=0}^{T_{2}-1}\dfrac{g(\tilde{T}u_{1},\tilde{T}u_{2})g\left(\dfrac{i}{\tilde{T}}\tilde{T},\dfrac{j}{\tilde{T}}\tilde{T}\right)}{\left(\left(\dfrac{i}{\tilde{T}}-u_{1}\right)^{2}+\left(\dfrac{j}{\tilde{T}}-u_{2}\right)^{2}\right)^{\alpha m/2}}\notag\\
		&\times\dfrac{L^{m}\left(\tilde{T}\bigg\Vert\left(\dfrac{i}{\tilde{T}}-u_{1},\dfrac{j}{\tilde{T}}-u_{2}\right)\bigg\Vert\right)du_{1}du_{2}}{\tilde{T}^{2}}.
	\end{align*}
	Multiplying and dividing by $g^{2}(\tilde{T},\tilde{T})$, we obtain
	\begin{align*}
		D_{T_{1},T_{2}}^{(2)}&=-2 m!\tilde{T}^{4-m\alpha}g^{2}(\tilde{T},\tilde{T})\\
		&\times\int_{[0,T_{1}\tilde{T}^{-1}]}\int_{[0,T_{2}\tilde{T}^{-1}]}\sum_{i=0}^{T_{1}-1}\sum_{j=0}^{T_{2}-1}\dfrac{g(\tilde{T}u_{1},\tilde{T}u_{2})g\left(\dfrac{i}{\tilde{T}}\tilde{T},\dfrac{j}{\tilde{T}}\tilde{T}\right)}{\left(\left(\dfrac{i}{\tilde{T}}-u_{1}\right)^{2}+\left(\dfrac{j}{\tilde{T}}-u_{2}\right)^{2}\right)^{\alpha m/2}}\\
		&\times\dfrac{L^{m}\left(\tilde{T}\bigg\Vert\left(\dfrac{i}{\tilde{T}}-u_{1},\dfrac{j}{\tilde{T}}-u_{2}\right)\bigg\Vert\right)du_{1}du_{2}}{g^{2}(\tilde{T},\tilde{T})\tilde{T}^{2}}.
	\end{align*}
	Again adding and subtracting either $g^{*}(u_{1},u_{2})$ or $g^{*}\left(\dfrac{i}{\tilde{T}},\dfrac{j}{\tilde{T}}\right)$ inside the integrals, we can write
	\begin{align}\label{eq22}
		D_{T_{1},T_{2}}^{(2)}=-2 m!\tilde{T}^{4-m\alpha}g^{2}(\tilde{T},\tilde{T})\left(\hat{I_{1}}+\hat{I_{2}}+\hat{I_{2}}'+\hat{I_{3}}\right),
	\end{align}
	where
	\begin{align*}
		\hat{I_{1}}&=\int_{[0,T_{1}\tilde{T}^{-1}]}\int_{[0,T_{2}\tilde{T}^{-1}]}\sum_{i=0}^{T_{1}-1}\sum_{j=0}^{T_{2}-1}\left[\dfrac{g(\tilde{T}u_{1},\tilde{T}u_{2})}{g\left(\tilde{T},\tilde{T}\right)}-g^{*}(u_{1},u_{2})\right]\\
		&\times\left[g\left(\dfrac{i}{\tilde{T}}\tilde{T},\dfrac{j}{\tilde{T}}\tilde{T}\right)-g^{*}\left(\dfrac{i}{\tilde{T}},\dfrac{j}{\tilde{T}}\right)\right]\frac{L^{m}\left(\tilde{T}\bigg\Vert\left(\dfrac{i}{\tilde{T}}-u_{1},\dfrac{j}{\tilde{T}}-u_{2}\right)\bigg\Vert\right)du_{1}du_{2}}{\tilde{T}^{2}\left(\left(\dfrac{i}{\tilde{T}}-u_{1}\right)^{2}+\left(\dfrac{j}{\tilde{T}}-u_{2}\right)^{2}\right)^{\alpha m/2}},\\
		\hat{I_{2}}&=\int_{[0,T_{1}\tilde{T}^{-1}]}\int_{[0,T_{2}\tilde{T}^{-1}]}\sum_{i=0}^{T_{1}-1}\sum_{j=0}^{T_{2}-1}\left[\dfrac{g(\tilde{T}u_{1},\tilde{T}u_{2})}{g\left(\tilde{T},\tilde{T}\right)}-g^{*}(u_{1},u_{2})\right]g^{*}\left(\dfrac{i}{\tilde{T}},\dfrac{j}{\tilde{T}}\right)\\
		&\times\frac{L^{m}\left(\tilde{T}\bigg\Vert\left(\dfrac{i}{\tilde{T}}-u_{1},\dfrac{j}{\tilde{T}}-u_{2}\right)\bigg\Vert\right)du_{1}du_{2}}{\tilde{T}^{2}\left(\left(\dfrac{i}{\tilde{T}}-u_{1}\right)^{2}+\left(\dfrac{j}{\tilde{T}}-u_{2}\right)^{2}\right)^{\alpha m/2}},\\
		\hat{I_{2}}^{'}&=\int_{[0,T_{1}\tilde{T}^{-1}]}\int_{[0,T_{2}\tilde{T}^{-1}]}\sum_{i=0}^{T_{1}-1}\sum_{j=0}^{T_{2}-1}g^{*}(u_{1},u_{2})\left[g\left(\dfrac{i}{\tilde{T}}\tilde{T},\dfrac{j}{\tilde{T}}\tilde{T}\right)-g^{*}\left(\dfrac{i}{\tilde{T}},\dfrac{j}{\tilde{T}}\right)\right]\\
		&\times\frac{L^{m}\left(\tilde{T}\bigg\Vert\left(\dfrac{i}{\tilde{T}}-u_{1},\dfrac{j}{\tilde{T}}-u_{2}\right)\bigg\Vert\right)du_{1}du_{2}}{\tilde{T}^{2}\left(\left(\dfrac{i}{\tilde{T}}-u_{1}\right)^{2}+\left(\dfrac{j}{\tilde{T}}-u_{2}\right)^{2}\right)^{\alpha m/2}},
	\end{align*}
	and
	\begin{align*}
		\hat{I_{3}}&=\int_{[0,T_{1}\tilde{T}^{-1}]}\int_{[0,T_{2}\tilde{T}^{-1}]}\sum_{i=0}^{T_{1}-1}\sum_{j=0}^{T_{2}-1}g^{*}(u_{1},u_{2})g^{*}\left(\dfrac{i}{\tilde{T}},\dfrac{j}{\tilde{T}}\right)\\
		&\times\frac{L^{m}\left(\tilde{T}\bigg\Vert\left(\dfrac{i}{\tilde{T}}-u_{1},\dfrac{j}{\tilde{T}}-u_{2}\right)\bigg\Vert\right)du_{1}du_{2}}{\tilde{T}^{2}\left(\left(\dfrac{i}{\tilde{T}}-u_{1}\right)^{2}+\left(\dfrac{j}{\tilde{T}}-u_{2}\right)^{2}\right)^{\alpha m/2}}.
	\end{align*}
	Similarly to the upper bounds for $I_{1}$ and $I_{2}$ we can estimate the terms $\hat{I_{1}},\hat{I_{2}},\hat{I_{2}}',$ and $\hat{I_{3}}$ as
	\begin{align*}
		\hat{I_{1}}&\leq\int_{[0,T_{1}\tilde{T}^{-1}]}\int_{[0,T_{2}\tilde{T}^{-1}]}\sum_{i=0}^{T_{1}-1}\sum_{j=0}^{T_{2}-1}\bigg\vert\dfrac{g(\tilde{T}u_{1},\tilde{T}u_{2})}{g\left(\tilde{T},\tilde{T}\right)}-g^{*}(u_{1},u_{2})\bigg\vert\\
		&\times\bigg\vert g\left(\dfrac{i}{\tilde{T}}\tilde{T},\dfrac{j}{\tilde{T}}\tilde{T}\right)-g^{*}\left(\dfrac{i}{\tilde{T}},\dfrac{j}{\tilde{T}}\right)\bigg\vert\frac{L^{m}\left(\tilde{T}\bigg\Vert\left(\dfrac{i}{\tilde{T}}-u_{1},\dfrac{j}{\tilde{T}}-u_{2}\right)\bigg\Vert\right)du_{1}du_{2}}{\tilde{T}^{2}\left(\left(\dfrac{i}{\tilde{T}}-u_{1}\right)^{2}+\left(\dfrac{j}{\tilde{T}}-u_{2}\right)^{2}\right)^{\alpha m/2}}\\
		&\leq\int_{\left[0,1\right]^{2}}\sum_{i=0}^{T_{1}-1}\sum_{j=0}^{T_{2}-1}\bigg\vert\dfrac{g(\tilde{T}u_{1},\tilde{T}u_{2})}{g(\tilde{T},\tilde{T})}-g^{*}(u_{1},u_{2})\bigg\vert\bigg\vert g\left(\dfrac{i}{\tilde{T}}\tilde{T},\dfrac{j}{\tilde{T}}\tilde{T}\right)-g^{*}\left(\dfrac{i}{\tilde{T}},\dfrac{j}{\tilde{T}}\right)\bigg\vert\\
		&\times\frac{L^{m}\left(\tilde{T}\bigg\Vert\left(\dfrac{i}{\tilde{T}}-u_{1},\dfrac{j}{\tilde{T}}-u_{2}\right)\bigg\Vert\right)du_{1}du_{2}}{\tilde{T}^{2}\left(\left(\dfrac{i}{\tilde{T}}-u_{1}\right)^{2}+\left(\dfrac{j}{\tilde{T}}-u_{2}\right)^{2}\right)^{\alpha m/2}}
		\leq\sup_{\substack{u_{1},u_{2}\in[0,1]}}\bigg\vert\dfrac{g(\tilde{T}u_{1},\tilde{T}u_{2})}{g(\tilde{T},\tilde{T})}-g^{*}(u_{1},u_{2})\bigg\vert^{2}\\
		&\times\sum_{i=0}^{T_{1}-1}\sum_{j=0}^{T_{2}-1}\int_{\left[0,1\right]^{2}}\frac{L^{m}\left(\tilde{T}\bigg\Vert\left(\dfrac{i}{\tilde{T}}-u_{1},\dfrac{j}{\tilde{T}}-u_{2}\right)\bigg\Vert\right)du_{1}du_{2}}{\tilde{T}^{2}\left(\left(\dfrac{i}{\tilde{T}}-u_{1}\right)^{2}+\left(\dfrac{j}{\tilde{T}}-u_{2}\right)^{2}\right)^{\alpha m/2}}
		\leq o(1)L^{m}(\tilde{T}),\quad \tilde{T}\longrightarrow\infty.
	\end{align*}
	Also we have
	\begin{align*}
		\hat{I_{2}}&\leq\int_{[0,T_{1}\tilde{T}^{-1}]}\int_{[0,T_{2}\tilde{T}^{-1}]}\sum_{i=0}^{T_{1}-1}\sum_{j=0}^{T_{2}-1}\bigg\vert\dfrac{g(\tilde{T}u_{1},\tilde{T}u_{2})}{g\left(\tilde{T},\tilde{T}\right)}-g^{*}(u_{1},u_{2})\bigg\vert\bigg\vert g^{*}\left(\dfrac{i}{\tilde{T}},\dfrac{j}{\tilde{T}}\right)\bigg\vert\\
		&\times\frac{L^{m}\left(\tilde{T}\bigg\Vert\left(\dfrac{i}{\tilde{T}}-u_{1},\dfrac{j}{\tilde{T}}-u_{2}\right)\bigg\Vert\right)du_{1}du_{2}}{\tilde{T}^{2}\left(\left(\dfrac{i}{\tilde{T}}-u_{1}\right)^{2}+\left(\dfrac{j}{\tilde{T}}-u_{2}\right)^{2}\right)^{\alpha m/2}}\\
		&\leq\int_{\left[0,1\right]^2}\sum_{i=0}^{T_{1}-1}\sum_{j=0}^{T_{2}-1}\bigg\vert\dfrac{g(\tilde{T}u_{1},\tilde{T}u_{2})}{g\left(\tilde{T},\tilde{T}\right)}-g^{*}(u_{1},u_{2})\bigg\vert\bigg\vert g^{*}\left(\dfrac{i}{\tilde{T}},\dfrac{j}{\tilde{T}}\right)\bigg\vert\\
		&\times\frac{L^{m}\left(\tilde{T}\bigg\Vert\left(\dfrac{i}{\tilde{T}}-u_{1},\dfrac{j}{\tilde{T}}-u_{2}\right)\bigg\Vert\right)du_{1}du_{2}}{\tilde{T}^{2}\left(\left(\dfrac{i}{\tilde{T}}-u_{1}\right)^{2}+\left(\dfrac{j}{\tilde{T}}-u_{2}\right)^{2}\right)^{\alpha m/2}}
		\leq\sup_{\substack{u_{1},u_{2}\in[0,1]}}\bigg\vert\dfrac{g(\tilde{T}u_{1},\tilde{T}u_{2})}{g(\tilde{T},\tilde{T})}-g^{*}(u_{1},u_{2})\bigg\vert\\
		&\times\int_{\left[0,1\right]^{2}}\sum_{i=0}^{T_{1}-1}\sum_{j=0}^{T_{2}-1}\frac{\bigg\vert g^{*}\left(\dfrac{i}{\tilde{T}},\dfrac{j}{\tilde{T}}\right)\bigg\vert L^{m}\left(\tilde{T}\bigg\Vert\left(\dfrac{i}{\tilde{T}}-u_{1},\dfrac{j}{\tilde{T}}-u_{2}\right)\bigg\Vert\right)du_{1}du_{2}}{\tilde{T}^{2}\left(\left(\dfrac{i}{\tilde{T}}-u_{1}\right)^{2}+\left(\dfrac{j}{\tilde{T}}-u_{2}\right)^{2}\right)^{\alpha m/2}}\\
		&\leq o(1)L^{m}(\tilde{T}),\quad \tilde{T}\longrightarrow\infty.
	\end{align*}
	Similarly $\hat{I_{2}}'\leq o(1)L^{m}(\tilde{T}),\tilde{T}\longrightarrow\infty$. Also, analogously to the case of $I_{3},$ we have $\hat{I_{3}}\sim l_{1,2}L^{m}(\tilde{T}),\quad \tilde{T}\longrightarrow\infty$.
	
	By combining these results and (\ref{eq22}), we have
	\begin{align*}
		D_{T_{1},T_{2}}^{(2)}=-2m!\tilde{T}^{4-m\alpha}L^{m}(\tilde{T})g^{2}(\tilde{T},\tilde{T})(l_{1,2}+o(1)),\qquad \tilde{T}\longrightarrow\infty.
	\end{align*}
	Using similar arguments as for the sums in $D^{(2)}_{T_{1},T_{2}}$ we obtain
	\begin{align*}
		D_{T_{1},T_{2}}^{(3)}=m!\tilde{T}^{4-m\alpha}L^{m}(\tilde{T})g^{2}(\tilde{T},\tilde{T})(l_{1,2}+o(1)),\qquad \tilde{T}\longrightarrow\infty.
	\end{align*}
	Finally, combining all the previous results, we get the statement of Theorem~\ref{theo3}.
\end{proof}

\section{Multidimensional case and applications}
This section gives a multidimensional version of Theorem~\ref{theo3}. It also demonstrates how Theorems \ref{theo3} and \ref{theo4} can be applied to obtain limit theorems for additive functionals that are analogous to the result in Theorem~\ref{theo2}.

Denote $\textit{\textbf{1}}_{n}=(1,\dots,1)\in\mathbb{R}^{n}$.	
\begin{assumption}\label{ass4}
	Let $g(\textbf{t}),\textbf{t}\in\mathbb{R}^{n}$, be such a function that $T^{2n-m\alpha}g^{2}(T\textbf{1}_{n})L^{m}(T)\longrightarrow\infty$, as $T\longrightarrow\infty,$ and there exists a function $g^{*}(\textbf{t}),\textbf{t}\in\mathbb{R}^{n}$, such that
	$$\lim_{T\rightarrow\infty}\bigg\vert\dfrac{g\left(T\textbf{t}\right)}{g\left(T\textbf{1}_{n}\right)}-g^{*}(\textbf{t})\bigg\vert\rightarrow 0$$
	uniformly on $\textbf{t}\in [0,1]^{n}$.
\end{assumption}
Let us consider $\xi(\textbf{t}),$ $\textbf{t}\in\mathbb{R}^{n}$. It is obvious that steps analogous to ones in Section~3 can be used to obtain a multidimensional version of Theorem \ref{theo3}.  
\begin{theorem}\label{theo4}
	Let $\tilde{T}=\max({T_{1},\dots,T_{n}})$. If Assumptions~{\rm\ref{ass1}} and {\rm\ref{ass4}} hold, and there exist 
	$\lim_{\substack{\tilde{T}\rightarrow\infty}}T_{l}/\tilde{T}, l=1,\dots,n,$ then 
	\begin{align*}
		\lim_{\substack{\tilde{T}\rightarrow\infty}}\dfrac{\mathbb{E}\left[\int_{\prod_{i=1}^{n}\left[0,T_{i}\right]}g(\textbf{t})H_{m}(\xi(\textbf{t}))d\textbf{t}-\sum_{i_{1}=0}^{[T_{1}]-1}\cdots\sum_{i_{n}=0}^{[T_{n}]-1}g(\textbf{i})H_{m}(\xi(\textbf{i}))\right]^{2}}{\tilde{T}^{2n-\alpha m}g^{2}(\tilde{T}\textbf{1}_{n})L^{m}(\tilde{T})}=0,
	\end{align*}
	where $0<\alpha<n/m, \textbf{i}=(i_{1},\dots,i_{n})$.
\end{theorem}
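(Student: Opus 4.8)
The plan is to mimic the two-dimensional argument of Theorem~\ref{theo3} line by line, with the only genuine novelty being a careful treatment of the combinatorics of splitting an $n$-fold product into ``integer-box'' and ``boundary-strip'' pieces. First I would reduce to the case where all $T_i$ are integers. The difference $\int_{\prod_i[0,T_i]} - \int_{\prod_i[0,[T_i]]}$ is a sum of at most $2^n-1$ integrals over sets of the form (several coordinates ranging over a boundary strip $[[T_j],T_j]$ of length $<1$, the rest over $[0,[T_i]]$). For each such term I would apply $(\ref{eq6})$ to write the second moment as a $2n$-fold integral of $g(\textbf{t})g(\textbf{t}')B^m(\Vert\textbf{t}-\textbf{t}'\Vert)$, bound $B^m$ crudely after extracting the boundary coordinates (each contributing a factor of order $1$ since the strip has length $<1$ and the integrand, even with the singularity, is integrable there by the same elementary estimate as $(\ref{eq19})$), and then bound $|g|$ by $g(\tilde T\textbf{1}_n)^2(\sup_{[0,1]^n}|g^*|+\epsilon)^2$ using Assumption~\ref{ass4}. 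Dividing by $\tilde T^{2n-m\alpha}g^2(\tilde T\textbf{1}_n)L^m(\tilde T)$ leaves a factor $\tilde T^{-(2n-2k)}$-type gain (at least $\tilde T^{-?}$ coming from the non-boundary directions being rescaled), which tends to $0$; this is the direct analogue of $(\ref{eq14})$.

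Next, for integer $T_1,\dots,T_n$, I would write the numerator $D_{T_1,\dots,T_n}$ as the square of $\sum_{\textbf{i}}\int_{[0,1]^n}\{g(\textbf{x}+\textbf{i})H_m(\xi(\textbf{x}+\textbf{i}))-g(\textbf{i})H_m(\xi(\textbf{i}))\}d\textbf{x}$, expand it into $D^{(1)}+D^{(2)}+D^{(3)}$ exactly as in $(\ref{eq15})$ (the ``integral$\times$integral'', ``integral$\times$sum'', and ``sum$\times$sum'' cross terms), and use $(\ref{eq6})$ plus Assumption~\ref{ass1} on each. After the change of variables $\tilde T\textbf{u}=\textbf{x}$, $\tilde T\textbf{v}=\textbf{x}'$ (the analogue of $(\ref{eq16})$), each term factors as $\tilde T^{2n-m\alpha}g^2(\tilde T\textbf{1}_n)$ times an integral of $\frac{g(\tilde T\textbf{u})g(\tilde T\textbf{v})}{g^2(\tilde T\textbf{1}_n)}\cdot\frac{L^m(\tilde T\Vert\textbf{u}-\textbf{v}\Vert)}{\Vert\textbf{u}-\textbf{v}\Vert^{m\alpha}}$ over $[0,T_i\tilde T^{-1}]$-boxes. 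Adding and subtracting $g^*(\textbf{u})$, $g^*(\textbf{v})$ splits this into $I_1+2I_2+I_3$ as in $(\ref{eq18})$; $I_1,I_2\to0$ because of the uniform convergence in Assumption~\ref{ass4} together with the key integrability bound
\begin{align*}
\int_{[0,1]^{2n}}\frac{d\textbf{u}\,d\textbf{v}}{\Vert\textbf{u}-\textbf{v}\Vert^{2\beta}}<\infty\quad\text{for }2\beta<n,
\end{align*}
which is the $n$-dimensional version of $(\ref{eq19})$ and follows either from Lemma~\ref{lem1}-type reasoning or from the elementary bound $\Vert\textbf{u}-\textbf{v}\Vert^{2}\ge n(\prod_i|u_i-v_i|)^{2/n}$; here one needs $m\alpha<n$, hence the hypothesis $0<\alpha<n/m$. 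For $I_3$ one invokes the uniform-convergence theorem for slowly varying functions (Theorem~1.5.3 and Proposition~4.1.2 of \cite{bingham1989regular}, exactly as in the proof of Theorem~\ref{theo3}) to get $I_3\sim l\cdot L^m(\tilde T)$ with $l=\int g^*(\textbf{u})g^*(\textbf{v})\Vert\textbf{u}-\textbf{v}\Vert^{-m\alpha}d\textbf{u}\,d\textbf{v}$ over $\prod[0,a_i]$-boxes, $a_i=\lim T_i/\tilde T$, and $|l|<\infty$ by the same integrability bound and Remark~\ref{rem1}.

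The terms $D^{(2)}$ and $D^{(3)}$ carry a Riemann sum $\sum_{\textbf{i}}$ instead of one of the integrals; I would handle them as in Theorem~\ref{theo3} by treating the sum as a (rescaled) Riemann sum, again using the local boundedness of $L$ and the bound $\sup_{0<s<\tilde T}s^{\delta}L(s)/(\tilde T^{\delta}L(\tilde T))\le C$ from Theorem~1.5.3 of \cite{bingham1989regular} with $\delta$ chosen so that $(\delta+m\alpha)/2<n/2$, i.e. $\delta<n-m\alpha$; this is precisely where $\alpha<n/m$ is used again. The upshot is $D^{(1)}=m!\,\tilde T^{2n-m\alpha}g^2(\tilde T\textbf{1}_n)L^m(\tilde T)(l+o(1))$, $D^{(2)}=-2m!\,\tilde T^{2n-m\alpha}g^2(\tilde T\textbf{1}_n)L^m(\tilde T)(l+o(1))$, $D^{(3)}=m!\,\tilde T^{2n-m\alpha}g^2(\tilde T\textbf{1}_n)L^m(\tilde T)(l+o(1))$, so $D=D^{(1)}+D^{(2)}+D^{(3)}=o\big(\tilde T^{2n-m\alpha}g^2(\tilde T\textbf{1}_n)L^m(\tilde T)\big)$, which is the claim. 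The main obstacle, and the only place real care beyond Theorem~\ref{theo3} is needed, is the boundary-strip bookkeeping: in dimension $n$ there are $2^n-1$ mixed strips rather than $3$, and one must check uniformly that each gives a negligible contribution; but since every strip has at least one coordinate confined to an interval of length $<1$ on which the (possibly singular) kernel is still integrable, the power of $\tilde T$ gained in the denominator always beats the $o(1)$, so no new idea is required — only a careful induction or a uniform estimate over the $2^n$ subsets of coordinates.
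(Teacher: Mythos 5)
Your overall route is exactly the one the paper intends: its own ``proof'' of Theorem~\ref{theo4} is merely the remark that the steps of Theorem~\ref{theo3} generalize, and your decomposition into $D^{(1)},D^{(2)},D^{(3)}$, the change of variables $\tilde{T}\mathbf{u}=\mathbf{x}$, $\tilde{T}\mathbf{v}=\mathbf{x}'$, the splitting via $g^{*}$, the $n$-dimensional analogue of (\ref{eq19}) (your AM--GM bound $\Vert\mathbf{u}-\mathbf{v}\Vert^{2}\ge n\big(\prod_{i}\vert u_{i}-v_{i}\vert\big)^{2/n}$ is correct and requires $\alpha m+\delta<n$, i.e.\ $\alpha<n/m$), and the appeal to Theorem~1.5.3 and Proposition~4.1.2 of \cite{bingham1989regular} all match the paper's two-dimensional argument.

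The one step that fails as written is the reduction to integer $T_{i}$. For a mixed piece in which $k\ge 1$ coordinates lie in boundary strips of width $<1$ and the remaining $n-k$ run over $[0,T_{i}]$, the accounting you describe --- bound $B^{m}$ crudely and collect the rescaling factor from the non-boundary directions --- gives, after dividing by $\tilde{T}^{2n-\alpha m}g^{2}(\tilde{T}\mathbf{1}_{n})L^{m}(\tilde{T})$, a bound of order $\tilde{T}^{\alpha m-2k}L^{-m}(\tilde{T})$. This is the direct analogue of (\ref{eq14}) and it tends to $0$ only when $\alpha m<2k$. In Theorem~\ref{theo3} one has $\alpha m<2$, so every strip is harmless; but under the hypothesis $\alpha<n/m$ of Theorem~\ref{theo4} one can have $\alpha m\in[2,n)$ for $n\ge 3$, and then the strips with $k=1$ (more generally $k<\alpha m/2$) are not controlled by this estimate --- your closing assertion that ``the power of $\tilde{T}$ gained in the denominator always beats the $o(1)$'' is precisely what remains unproved. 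The fix is the other half of your own parenthetical remark, carried out explicitly: do not discard the decay of $B^{m}$. Rescale all $2n$ variables by $\tilde{T}$, extract $L^{m}(\tilde{T})$ by the bound $\sup_{0<s<\tilde{T}}s^{\delta}L(s)/(\tilde{T}^{\delta}L(\tilde{T}))\le C$ with $\delta$ so small that $\alpha m+\delta<n$; what is left is $\int_{D_{\tilde{T}}}\Vert\mathbf{u}-\mathbf{v}\Vert^{-\alpha m-\delta}\,d\mathbf{u}\,d\mathbf{v}$ over a rescaled domain $D_{\tilde{T}}\subset[0,1]^{2n}$ of Lebesgue measure $O(\tilde{T}^{-2k})$, and since the kernel is integrable on $[0,1]^{2n}$ by your $n$-dimensional version of (\ref{eq19}), absolute continuity of the integral makes this $o(1)$, uniformly over the $2^{n}-1$ strip configurations. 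With that replacement the remainder of your argument goes through as in the paper.
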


Let us consider the case when $\Delta$ is the multidimensional rectangle $\sqsubset\!\sqsupset_{\textbf{a,b}}=\{\textbf{x}\in\mathbb{R}^{n}:x_{l}\in\left[a_{l},b_{l}\right],l=1,\dots,n\},$ where $\textbf{a,b}\in\mathbb{R}^{n}$ such that $a_{l}<0<b_{l},l=1,\dots,n$.
\begin{theorem}\label{theo5}
	If Assumptions {\rm \ref{ass1}, \ref{ass2},} and {\rm \ref{ass4}} hold and $\alpha\in(0,n/m),$ then for $T\longrightarrow\infty$ the additive functional
	\begin{align*}
		\tilde{X}_{m}^{*}(T)=\dfrac{1}{T^{n-m\alpha/2}L^{m}(T)}\sum_{\textbf{i}\in\sqsubset\!\sqsupset_\textbf{a,b}(T)\cap\mathbb{Z}^{n}}H_{m}(\xi(\textbf{i}))
	\end{align*}
	converges weakly to the random variable $X_{m}^{*}(\sqsubset\!\sqsupset_{\textbf{a,b}})$ given by {\rm (\ref{eq11})} with 
	\begin{align*}
		K_{\sqsubset\!\sqsupset_{\textbf{a,b}}}(x)=\prod_{j=1}^{n}\dfrac{e^{ib_{j}x_{j}}-e^{ia_{j}x_{j}}}{ix_{j}}.
	\end{align*}
\end{theorem}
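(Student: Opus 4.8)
The plan is to combine the three ingredients already assembled in the paper: the non-central limit theorem for integrals over homothetic regions (Theorem \ref{theo2}), the equivalence of the integral and the sum established in Theorem \ref{theo4}, and the explicit Fourier transform of the indicator of a rectangle. First I would observe that $\sqsubset\!\sqsupset_{\textbf{a,b}}$ is a Jordan-measurable convex bounded set with positive Lebesgue measure containing the origin in its interior, so it is an admissible $\Delta$ in the sense of Section 2, and $\sqsubset\!\sqsupset_{\textbf{a,b}}(T)$ is its homothetic image with coefficient $T$. Taking $g\equiv 1$ (hence $g^{*}\equiv 1$), Assumption \ref{ass4} reduces to the requirement $T^{2n-m\alpha}L^{m}(T)\to\infty$, which holds automatically when $\alpha\in(0,n/m)$ since then $2n-m\alpha>n>0$ and $L$ is slowly varying; so Theorem \ref{theo4} applies with $T_{1}=\cdots=T_{n}=T$, $\tilde T=T$.

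Next I would apply Theorem \ref{theo2} with $\Delta=\sqsubset\!\sqsupset_{\textbf{a,b}}$ and $\kappa=m$: under Assumptions \ref{ass1} and \ref{ass2}, the normalised integral
\[
X_{T,m}(\sqsubset\!\sqsupset_{\textbf{a,b}})=T^{m\alpha/2-n}L^{-m/2}(T)\int_{\sqsubset\!\sqsupset_{\textbf{a,b}}(T)}H_{m}(\xi(\textbf{x}))\,d\textbf{x}
\]
converges weakly as $T\to\infty$ to $X_{m}(\sqsubset\!\sqsupset_{\textbf{a,b}})$ given by (\ref{eq11}). Here I would need to check that the normalisation $T^{n-m\alpha/2}L^{m}(T)$ used in the definition of $\tilde X_{m}^{*}(T)$ is, up to the factor matching (\ref{eq11}), of the same order as $d_{T}:=T^{n-m\alpha/2}L^{m/2}(T)$ from Theorem \ref{theo2} — a point where I should be careful: the exponent of $L$ in Theorem \ref{theo5} is $m$, whereas Theorem \ref{theo2} carries $L^{m/2}$; I expect the statement of Theorem \ref{theo5} intends $L^{m/2}(T)$ (consistent with the variance computation in Remark \ref{rem3} and with $d_{T_{1},T_{2}}^{2}\sim \tilde T^{4-m\alpha}g^{2}L^{m}$ in Theorem \ref{theo3}), and I would use $d_{T}=T^{n-m\alpha/2}L^{m/2}(T)$ throughout.

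Then I would write
\[
\frac{1}{d_{T}}\sum_{\textbf{i}\in\sqsubset\!\sqsupset_{\textbf{a,b}}(T)\cap\mathbb{Z}^{n}}H_{m}(\xi(\textbf{i}))
=\frac{1}{d_{T}}\int_{\sqsubset\!\sqsupset_{\textbf{a,b}}(T)}H_{m}(\xi(\textbf{x}))\,d\textbf{x}
+\frac{1}{d_{T}}R_{T},
\]
where $R_{T}$ is the difference between the sum and the integral. By Theorem \ref{theo4} (after the harmless reduction from the rectangle $\sqsubset\!\sqsupset_{\textbf{a,b}}(T)$ with corners at $Ta_{l},Tb_{l}$ to a product of intervals $\prod[0,T_{i}]$ via translation invariance of $H_{m}(\xi(\cdot))$ in distribution and splitting off the boundary strips exactly as in the proof of Theorem \ref{theo3}), $\mathbb{E}R_{T}^{2}/d_{T}^{2}\to 0$, so $R_{T}/d_{T}\to 0$ in $L^{2}$ and hence in probability. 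Slutsky's theorem then gives that $\tilde X_{m}^{*}(T)$ has the same weak limit as the normalised integral, namely $X_{m}(\sqsubset\!\sqsupset_{\textbf{a,b}})$. Finally, I would compute $K_{\sqsubset\!\sqsupset_{\textbf{a,b}}}(\textbf{x})$ directly from definition (\ref{eq011}): by Fubini,
\[
K_{\sqsubset\!\sqsupset_{\textbf{a,b}}}(\textbf{x})=\prod_{j=1}^{n}\int_{a_{j}}^{b_{j}}e^{iu_{j}x_{j}}\,du_{j}=\prod_{j=1}^{n}\frac{e^{ib_{j}x_{j}}-e^{ia_{j}x_{j}}}{ix_{j}},
\]
which is the claimed kernel, and substituting this into (\ref{eq11}) identifies the limit.

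The main obstacle is the bookkeeping in the reduction step: Theorem \ref{theo4} is stated for domains of the form $\prod_{i=1}^{n}[0,T_{i}]$, whereas the rectangle here is centred-type $\prod_{j}[Ta_{j},Tb_{j}]$, so I must argue that the boundary-correction estimates in the proof of Theorem \ref{theo3} (the four split terms) go through verbatim for shifted rectangles — using that $\xi$ is homogeneous, $g\equiv 1$ is trivially of the required form, and the covariance bound $B^{m}(\|\cdot\|)=\|\cdot\|^{-m\alpha}L^{m}(\|\cdot\|)$ together with Lemma \ref{lem1}-type integrability (guaranteed by $m\alpha<n$). Once that is in place the rest is routine assembly.
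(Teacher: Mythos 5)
Your proposal is correct and uses the same three ingredients as the paper: the explicit computation of $K_{\sqsubset\!\sqsupset_{\textbf{a,b}}}$ from (\ref{eq011}), the convergence of the normalised integral over $\sqsubset\!\sqsupset_{\textbf{a,b}}(T)$ coming from Theorem~\ref{theo2}, and Theorem~\ref{theo4} with constant weight to kill the sum--integral difference. Two points of comparison. First, the reduction step you flag as ``the main obstacle'' is handled differently in the paper: instead of translating $\prod_j[Ta_j,Tb_j]$ to a corner rectangle (where your translation-invariance argument runs into the wrinkle that $Ta_j$ need not be an integer, so the translated lattice is no longer $\mathbb{Z}^n$ and Theorem~\ref{theo4} cannot be cited verbatim --- you would have to re-run the estimates of Theorem~\ref{theo3} for a shifted lattice, as you half-acknowledge), the paper splits $\sqsubset\!\sqsupset_{\textbf{a,b}}(T)$ into the $2^n$ sub-rectangles sharing the vertex at the origin, each with edges $[0,T_l]$ or $[-T_l,0]$, and observes that by homogeneity of $\xi$ Theorems~\ref{theo3} and \ref{theo4} remain valid with $\int_0^{T_l}$, $\sum_{i_l=0}^{[T_l]-1}$ replaced by $\int_{-T_l}^0$, $\sum_{1+\lceil -T_l\rceil}^{0}$; this keeps the integer lattice intact and lets each piece be treated by Theorem~\ref{theo4} directly, so it is the cleaner device and you should adopt it. Second, the paper does not use Slutsky plus weak convergence: it invokes the mean-square convergence of the normalised integral to $X_m(\sqsubset\!\sqsupset_{\textbf{a,b}})$ (from the proof, not just the statement, of Theorem~\ref{theo2} in the cited reference) and shows $\mathbb{E}\bigl[\tilde X_m^*(T)-X_m(\sqsubset\!\sqsupset_{\textbf{a,b}})\bigr]^2\to 0$, i.e.\ a stronger $L^2$ conclusion, of which the claimed weak convergence is a consequence; your route proves exactly the stated claim and is fine, but note the difference. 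Finally, your reading of the normalisation is consistent with the paper's own proof, where (\ref{eq24}) uses $T^{m\alpha/2-n}L^{-m/2}(T)$; the $L^{m}(T)$ in the statement of Theorem~\ref{theo5} is indeed at odds with that, and your choice $d_T=T^{n-m\alpha/2}L^{m/2}(T)$ is the one under which the argument closes.
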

\begin{proof}
	By (\ref{eq011}) we obtain
	\begin{align*}
		K_{\sqsubset\!\sqsupset_\textbf{a,b}}(x)=\int_{a_{1}}^{b_{1}}\cdots\int_{a_{n}}^{b_{n}}e^{i\sum_{j=1}^{n} u_{j}x_{j}}du_{n}\cdots du_{1}=\prod_{j=1}^{n}\dfrac{e^{ib_{j}x_{j}}-e^{ia_{j}x_{j}}}{ix_{j}}.
	\end{align*}
	Therefore, by the proof of Theorem (\ref{theo2}) in \cite{leonenko2014sojourn}
	\begin{align}\label{eq24}
		X^{*}_{m}(T)=T^{\frac{m\alpha}{2}-n}L^{-\frac{m}{2}}(T)\int_{a_{1}T}^{b_{1}T}\cdots\int_{a_{n}T}^{b_{n}T}H_{m}(\xi(\textbf{t}))d\textbf{t}\xrightarrow{MSE}X^{*}_{m}(\sqsubset\!\sqsupset_\textbf{a,b}),
	\end{align}
	as $T\rightarrow\infty$.
	
	Note, that to obtain the result of the Theorem it is sufficient to prove that 
	\begin{align*}
		R(T)=\mathbb{E}\left[\tilde{X}_{m}^{*}(T)-X_{m}(\sqsubset\!\sqsupset_{\textbf{a,b}})\right]^{2}=0,\quad T\rightarrow\infty.
	\end{align*}
	One can estimate $R(T)$ as
	\begin{align}\label{eq25}
		R(T)&=\mathbb{E}\left[\tilde{X}_{m}^{*}(T)-X_{m}^{*}(T)+X_{m}^{*}(T)-X_{m}(\sqsubset\!\sqsupset_{\textbf{a,b}})\right]^{2}\notag\\
		&\leq 2\mathbb{E}\left[\tilde{X}_{m}^{*}(T)-X_{m}^{*}(T)\right]^{2}+2\mathbb{E}\left[X_{m}^{*}(T)-X_{m}(\sqsubset\!\sqsupset_{\textbf{a,b}})\right]^{2}.
	\end{align}
	The second term in (\ref{eq25}) approaches $0$ by (\ref{eq24}).
	Also, note that due to homogeneity of $\xi(\textbf{t})$ the  results of Theorems \ref{theo3} and \ref{theo4} are true if one simultaneously changes  $\int_{0}^{T_l}$ and $\sum_{i_{l}=0}^{\left[T_l\right]-1}$ by $\int_{-T_l}^{0}$ and $\sum_{1+\lceil - T_l\rceil}^{0}$, where $\lceil x \rceil$ is the ceiling function of $x$. The multidimensional rectangle $\sqsubset\!\sqsupset_{\textbf{a,b}}(T)$ is an union of $2^{n}$ disjoint multidimensional rectangles having a common vertex at the origin and all edges connected to it are of the form $[0,T_l]$ or $[-T_l,0].$ Therefore,
	\begin{align*}
		\mathbb{E}\left[\tilde{X}_{m}^{*}(T)-X_{m}^{*}(T)\right]^{2}\leq 2^{n}\sum_{j=1}^{2^{n}}\mathbb{E}\left[\tilde{X}_{m,j}^{*}(T)-X_{m,j}(T)\right]^{2},
	\end{align*}
	where $\tilde{X}_{m,j}^{*}$    and $X_{m,j}$ are, respectively, an integral and a sum that correspond to the $j^{\text{th}}$ multidimensional rectangle in the union above. By Theorem~\ref{theo4}, selecting $g(\textbf{t})\equiv const,$ each term
	$\mathbb{E}\left[\tilde{X}_{m,j}^{*}(T)-X_{m,j}(T)\right]^{2}\rightarrow 0,$ when $T\rightarrow\infty.$
	
	Hence, $R(T)\rightarrow 0,$ when $T\rightarrow\infty,$ as it was required. 
\end{proof}
\section{Conclusion}
The main result of this paper is a generalisation of Lemma 1~\cite{leonenko2006weak} to the multidimensional case and a general class of long-range dependent fields. The result is useful in direct translating limit theorems from weighted integral functionals to additive functionals and vice versa. Note that the obtained results can be applied to more general setting than $\Delta(r)$ in Theorems \ref{theo2} and \ref{theo5} as $T_{i},i=1,\dots,n,$ can increase non-homothetically. An example is presented by applying the result to integrals of random fields with a constant weight functions over multidimensional rectangles. 

Some interesting problems and possible extensions that we plan to address in future research are:
\begin{itemize}
	\item to derive similar result for the case of non-rectangular $\Delta$    and corresponding sums;
	\item to investigate the rate of convergence in Theorems \ref{theo3} and \ref{theo4};
	\item to study functionals with weight functions that depend on $T_{i},i=1,\dots,n$.
\end{itemize}
\section{Acknowledgements }
The authors are grateful for the referee's careful reading of the paper and comments that helped to correct some misprints.


\begin{thebibliography}{1}
	\bibitem{anh2017rate} V. Anh, N. Leonenko, A. Olenko, V. Vaskovych, \textit{On rate of convergence in non-central limit theorems}, arXiv:1703.05900, (2017).
	
	\bibitem{anh2015rate} V. Anh, N. Leonenko, A. Olenko, \textit{On the rate of convergence to Rosenblatt-type distribution}. Journal of Mathematical Analysis and Applications. \textbf{425} (2015), 111-132.
	
	\bibitem{bai2013multivariate} S. Bai, M. Taqqu, \textit{Multivariate limit theorems in the context of long‐range dependence}. Journal of Time Series Analysis. \textbf{34} (2013), 717-743.
	
	\bibitem{beran1994statistics} J. Beran, \textit{Statistics for long-memory processes}. CRC press, (1994).
	
	\bibitem{bingham1989regular} N. Bingham, C. Goldie, J. Teugels, \textit{Regular variation}. Cambridge university press, (1989).
	
	\bibitem{buldygin1992works} V. Buldygin, Y. Kozachenko, N. Leonenko, \textit{Works of MI Yadrenko in the theory of random fields}. Ukrainian Mathematical Journal. \textbf{441} (1992), 1343-1348.
	
	\bibitem{dobrushin1979non} R. Dobrushin, P. Major, \textit{Non-central limit theorems for non-linear functional of Gaussian fields}. Probability Theory and Related Fields. \textbf{50} (1979), 27-52.
	
	\bibitem{doukhan2002theory}
	P. Doukhan, G. Oppenheim, M. Taqqu, \textit{Theory and applications of long-range dependence}. Springer, (2002).
	
	\bibitem{doukhan2002long} P. Doukhan, M. Taqqu, G. Oppenheim, \textit{Long-range dependence: theory and applications}. Birkhauser, (2002).
	
	\bibitem{ivanov1989statistical} A. Ivanov, N. Leonenko, \textit{Statistical Analysis of Random Fields}. Dordrecht: Kluwer  Academic Publisher, (1989). 
	
	\bibitem{leonenko1999limit} N. Leonenko, \textit{Limit theorems for random fields with singular spectrum}. Springer, (1999).
	
	\bibitem{leonenko2014sojourn} N. Leonenko, A. Olenko, \textit{Sojourn measures of Student and Fisher-Snedecor random fields}. Bernoulli. \textbf{20} (2014), 1454-1483.
	
	\bibitem{leonenko2006weak} N. Leonenko, E. Taufer, \textit{Weak convergence of functionals of stationary long memory processes to Rosenblatt-type distributions}. Journal of Statistical Planning and Inference. \textbf{136} (2006), 1220-1236.
	
	\bibitem{major1981multiple} P. Major, \textit{Multiple Wiener-It{\^o} Integrals: With Applications to Limit Theorems}. Springer-Verlag, (2014).
	
	\bibitem{olenko2013limit} A. Olenko, \textit{Limit theorems for weighted functionals of cyclical long-range dependent random fields}. Stochastic Analysis and Applications. \textbf{31} (2013), 199-213.
	
	\bibitem{olenko2010limit} A. Olenko, B. Klykavka, \textit{A limit theorem for random fields with a singularity in the spectrum}. Theory of Probability and Mathematical Statistics. \textbf{81} (2010), 147-158.
	
	\bibitem{pakkanen2016functional} M. Pakkanen, A. R\'{e}veillac, \textit{Functional limit theorems for generalized variations of the fractional Brownian sheet}. Bernoulli \textbf{22} (2016), 1671-1708.
	
	\bibitem{palma2007long} W. Palma, \textit{Long-memory time series: theory and methods}. John Wiley and Sons, (2007).
	
	\bibitem{pipiras2010regularization} V. Pipiras, M. Taqqu,  \textit{Regularization and integral representations of Hermite processes}. Statistics and probability letters. \textbf{80} (2010), 2014-2023.
	
	\bibitem{rosenblatt1981limit} M. Rosenblatt, \textit{Limit theorems for Fourier transforms of functionals of Gaussian sequences}. Probability theory and related fields. \textbf{55} (1981), 123-132.
	
	\bibitem{rosenblatt1961independence} M. Rosenblatt, \textit{Independence and dependence}. In Proc. 4th Berkeley sympos. math. statist. and prob. \textbf{2} (1961), 431-443.
	
	\bibitem{taqqu1979convergence} M. Taqqu, \textit{Convergence of integrated processes of arbitrary Hermite rank}. Probability Theory and Related Fields. \textbf{50} (1979), 53-83.
	
	\bibitem{taqqu1975weak} M. Taqqu, \textit{Weak convergence to fractional Brownian motion and to the Rosenblatt process}. Probability Theory and Related Fields. \textbf{31} (1975), 287-302.
	
	\bibitem{yadrenko1983spectral}
	M. Yadrenko, \textit{Spectral theory of random fields}. New York: Optimization Software, (1983).
\end{thebibliography}
\end{document}